\newtheorem{theorem}{Theorem}[section]
\newtheorem{corollary}[theorem]{Corollary}
\newtheorem{lemma}[theorem]{Lemma}
\newtheorem{proposition}[theorem]{Proposition}
\theoremstyle{definition}
\newtheorem{definition}[theorem]{Definition}
\newtheorem*{remark}{Remark}
\newcommand{\ep}{\varepsilon}
\newcommand{\RR}{\mathbb{R}}
\newcommand{\CC}{\mathbb{C}}
\newcommand{\NN}{\mathbb{N}}
\newcommand{\ZZ}{\mathbb{Z}}
\newcommand{\TT}{\mathbb{T}}
\newcommand{\cI}{\mathcal{I}}
\newcommand{\cR}{\mathcal{R}}
\newcommand{\cF}{\mathcal{F}}
\newcommand{\cE}{\mathcal{E}}
\newcommand{\cN}{\mathcal{N}}
\newcommand{\sC}{\mathscr{C}}
\newcommand{\sg}{\mathscr{G}}
\newcommand{\al}{\alpha}
\newcommand{\tran}{\mathrm{Tran}(X,f)}
\newcommand{\id}{\mathrm{Id}}
\tikzset{
	>=stealth',
	punktchain/.style={
		rectangle, 
		rounded corners, 
		draw=black, very thick,
		text width=9em, 
		minimum height=5em, 
		text centered, 
		on chain},
	line/.style={draw, thick, <-},
	element/.style={
		tape,
		top color=white,
		bottom color=blue!50!black!60!,
		minimum width=5em,
		draw=blue!40!black!90, very thick,
		text width=9em, 
		minimum height=5em, 
		text centered, 
		on chain},
	every join/.style={->, thick,shorten >=1pt},
	decoration={brace},
	tuborg/.style={decorate},
	tubnode/.style={midway, right=2pt},
}
\title[Zero-entropy dynamical systems with the gluing orbit property]
      {Zero-entropy dynamical systems with the gluing orbit property}
\author[Peng Sun]{}
\subjclass[2010]{Primary: 37B05, 37B40, 37C50.
        Secondary: 37A35, 37C40}
 \keywords{ 
gluing orbit property, topological entropy, equicontinuity, minimality, 
 uniform rigidity, intermediate entropy.  }
 \email{sunpeng@cufe.edu.cn}
\begin{document}

\maketitle\

\centerline{\scshape Peng Sun}
\medskip
{\footnotesize
 \centerline{China Economics and Management Academy}
   \centerline{Central University of Finance and Economics}
   \centerline{Beijing 100081, China}
} 

\bigskip

\begin{abstract}

Under the assumption of the gluing orbit property, 
equivalent conditions to having zero
topological entropy are investigated.
In particular, we show that a  dynamical system has the gluing orbit property and zero topological entropy
if and only if it is minimal and equicontinuous. 

\end{abstract}


\section{Introduction}

The specification property introduced by Bowen \cite{Bowen} plays an important
role in hyperbolic dynamics and thermodynamical formalism. 
Based on this property, 
a number of classical results
have been established, including 
growth rate of periodic orbits \cite{Bowen},
uniqueness of equilibrium states \cite{Bowen3},  
genericity of certain invariant measures \cite{Sig, Sig74}
and so on.
It is well-known
that dynamical systems with the specification property must have positive topological entropy.

Bowen's specification property is closely related to uniform hyperbolicity
and is very restrictive.
Its weak variations, called specification-like properties,
have been introduced to study broader classes of systems. 
Such properties still carry adequate information of the topological structure
of the systems and 
may survive
some non-hyperbolic behaviors.
Most results for systems with the specification property can be generalized 
under such weaker assumptions
(e.g. \cite{BTV, CT, PS}), while different phenomena
have also been observed (e.g. \cite{Pavlov, Sun19}).
More information can be found in the survey \cite{KLO}.

The gluing orbit property, a fairly new specification-like property
introduced in  \cite{BV, CT, ST}, 
has drawn much attention in recent years.
It is illustrated in \cite{BTV}  that
the gluing orbit property is compatible with zero topological
entropy, which distinguishes itself from 
stronger specification-like properties.
However, we perceive that a zero-entropy system
with the gluing orbit property should be quite simple and special.
In \cite{Sun19}, we have shown that such a  system must be minimal. 
In this article, we give a complete characterization of such systems: they are
just the minimal rotations.

\begin{theorem}\label{zeroent}
A topological dynamical system has both the gluing orbit property and zero topological entropy if and
only if it is minimal and equicontinuous, i.e. it is topologically conjugate to
a minimal rotation on a compact abelian group.
\end{theorem}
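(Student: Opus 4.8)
The proof splits into an easy implication and a substantial one, the latter reducing to the assertion that the gluing orbit property together with zero topological entropy forces equicontinuity. Throughout, $d_n(x,y)=\max_{0\le i<n}d(T^ix,T^iy)$ denotes the Bowen metrics and $s_n(\varepsilon)$ the maximal cardinality of an $(n,\varepsilon)$-separated set.

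\emph{The easy direction.} Suppose $(X,T)$ is conjugate to a minimal rotation $Tx=x+a$ on a compact abelian group $G$ carrying a translation-invariant metric. Then $T$ is an isometry, so every $d_n$ equals $d$ and $h_{\mathrm{top}}(T)=0$. For the gluing orbit property I would first observe, from minimality and compactness, that for each $\varepsilon>0$ there is $M=M(\varepsilon)$ with $\{Na,(N+1)a,\dots,(N+M)a\}$ $\varepsilon$-dense in $G$ for every $N$ (uniform $\varepsilon$-density of the orbit, plus invariance of $d$ under translations). Given orbit segments of $x_1,\dots,x_k$, I would take $y=x_1$ as shadowing point and choose the gluing times $t_1,\dots,t_{k-1}\le M$ one at a time: since $T$ is an isometry, asking the $i$-th block of $y$ to stay within $\varepsilon$ of the orbit of $x_i$ is exactly asking $\bigl(\sum_{l<i}n_l+t_1+\dots+t_{i-1}\bigr)a$ to lie within $\varepsilon$ of $x_i-x_1$, which the $\varepsilon$-density statement grants. (Zero entropy and equicontinuity of such rotations are classical, and the Halmos--von Neumann theorem supplies the parenthetical reformulation.)

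\emph{The main direction.} Assume $(X,T)$ has the gluing orbit property and $h_{\mathrm{top}}(T)=0$. I would first record the elementary fact that a system is equicontinuous if and only if $\sup_n s_n(\varepsilon)<\infty$ for every $\varepsilon>0$; the nontrivial direction comes from taking, for each $\varepsilon$, maximal (hence spanning) $(n,\varepsilon)$-separated sets, passing to a Hausdorff limit, and extracting a finite cover of $X$ by sets of diameter $\le 2\varepsilon$ in the metric $\sup_{n\ge0}d(T^n\cdot,T^n\cdot)$. So if $(X,T)$ were not equicontinuous, there is $\varepsilon_1>0$ with $\sup_n s_n(\varepsilon_1)=\infty$. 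Set $\varepsilon=\varepsilon_1/5$, let $M=M(\varepsilon)$ be the gluing constant, pick $n$ with $s_n(\varepsilon_1)>M+1$, and fix an $(n,\varepsilon_1)$-separated set $E_n$ of that size. For each $\mathbf{p}=(p_1,\dots,p_k)\in E_n^{\,k}$ the gluing orbit property produces a point whose orbit $\varepsilon$-shadows the blocks of $p_1,\dots,p_k$ in order with gaps $t_i\le M$; choosing one such point and one such gap-tuple defines a map $\Phi\colon E_n^{\,k}\to X$. The key point is that if $\Phi(\mathbf p)$ and $\Phi(\mathbf p')$ lie within $\varepsilon$ of a common point in $d_{k(n+M)}$ \emph{and} carry the same gap-tuple, the blocks occupy identical time windows, so $d_n(p_i,p_i')<4\varepsilon<\varepsilon_1$ and hence $\mathbf p=\mathbf p'$. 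Since there are at most $(M+1)^{k-1}$ gap-tuples, each ball of a maximal (spanning) $(k(n+M),\varepsilon)$-separated set absorbs at most $(M+1)^{k-1}$ of the $|E_n|^{k}$ images, so $s_{k(n+M)}(\varepsilon)\ge s_n(\varepsilon_1)^{k}/(M+1)^{k-1}$, and letting $k\to\infty$ gives $h_{\mathrm{top}}(T)\ge\bigl(\log s_n(\varepsilon_1)-\log(M+1)\bigr)/(n+M)>0$, a contradiction. Thus $(X,T)$ is equicontinuous. The gluing orbit property trivially forces topological transitivity (glue two one-step segments), a transitive equicontinuous system is minimal (this is also contained in \cite{Sun19}), and Halmos--von Neumann identifies it with a minimal rotation on a compact abelian group.

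\emph{The main obstacle.} The delicate step is the counting in the main direction: unlike specification, the gluing orbit property gives no control on the connecting gaps, so the blocks of $\Phi(\mathbf p)$ may drift across the word by as much as $(k-1)M$. What rescues the estimate is that the number of gap patterns, $(M+1)^{k-1}$, grows exponentially in $k$ with a \emph{fixed} base, whereas the number of encoded words, $|E_n|^{k}$, can be made to grow with a \emph{strictly larger} base by feeding in a genuinely big alphabet $E_n$ — which is exactly what non-equicontinuity delivers through the unboundedness of $s_n(\varepsilon_1)$. A two-symbol alphabet, the naive first attempt, yields nothing, and this is precisely why zero entropy is compatible with the gluing orbit property only in the rigid (equicontinuous) case.
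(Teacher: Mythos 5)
Your proof is correct, but it takes a genuinely different route from the paper's. The paper factors the implication ``gluing orbit property $+$ zero entropy $\Rightarrow$ equicontinuous'' through the intermediate notion of \emph{uniform rigidity}. Its Proposition~3.5 shows that a non-uniformly-rigid system with the gluing orbit property has positive entropy, and its Proposition~3.8 shows that a uniformly rigid system with the gluing orbit property is uniformly almost periodic, hence equicontinuous. The first step is the harder one, and the paper's coding there is a \emph{two-symbol} code: it uses a single transitive point $p$ but two block lengths $T+M$ and $T$, so that whatever the gaps are, two sequences that differ at some index produce a relative time-shift of the tracing orbit by an amount in $\{1,\dots,2M-1\}$; the failure of uniform rigidity (Lemma~3.4) supplies a time $\tau_r$ at which $p$ and $f^r(p)$ are $\gamma$-apart, yielding an $\varepsilon$-separation. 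The gaps are handled by case analysis (Cases 1 and 2 of Lemma~3.6), not by pigeonholing over gap-tuples.

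You instead rely on the characterization that $(X,T)$ is equicontinuous if and only if $\sup_n s_n(\varepsilon)<\infty$ for every $\varepsilon>0$, and you beat the gap-uncertainty by brute force: the number of admissible gap-tuples grows like $M^{k-1}$ with a base $M$ that depends only on $\varepsilon$, while non-equicontinuity lets you choose an alphabet $E_n$ with $|E_n|>M+1$, so $|E_n|^k$ dominates. This is valid and arguably more direct, bypassing uniform rigidity entirely. What the paper's route buys is that uniform rigidity is established en route, which is exactly one of the equivalences in its Theorem~\ref{equiconds}; and the paper's Lemma~3.7 (the ``uniform rigidity $\Rightarrow$ uniform almost periodicity'' step) is reused for nothing in your approach. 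Your closing remark that ``a two-symbol alphabet yields nothing'' is inaccurate as a comment on the landscape: the paper does make a two-symbol alphabet work, by varying the block length rather than the symbol, which is precisely how it sidesteps the gap-count that your argument has to outrun. One small calibration point: per the paper's Definition~\ref{defgo} the gaps lie in $\{1,\dots,M\}$, so there are $M^{k-1}$ gap-tuples rather than $(M+1)^{k-1}$; this only helps your estimate. Also, the characterization of equicontinuity via boundedness of $s_n(\varepsilon)$ is not in the paper and is not entirely trivial (one needs that $(X,d_\infty)$ is complete, hence compact, hence the identity $(X,d)\to(X,d_\infty)$ is uniformly continuous); your sketch via a diagonal limit of maximal separated sets is correct but should be written out if you want the argument to be self-contained.
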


In the smooth setting, positive topological entropy implies non-zero Lyapunov
exponents. So
Theorem \ref{zeroent} indicates that the gluing orbit property is still related
to some weak hyperbolicity, except for the special case of minimal rotations.

By \cite[Theorem 1.2]{Sun19}, we already know that every minimal equicontinuous system
has the gluing orbit property.
To prove Theorem \ref{zeroent},
we need to show that a zero-entropy system with the gluing orbit property is equicontinuous. 
Here we encounter an essential difficulty caused by the variable gaps,
which is just the essential difference between the gluing orbit property and
stronger specification-like properties. Such a difficulty is not a big trouble in
 other works on the gluing orbit property (e.g. \cite{BTV, CLT, ST}).
However, it is just the variable gaps that allow weaker specification-like
properties to get along with certain non-hyperbolic behaviors.
In the sequence of works \cite{Sun19, Sunflow, Sununierg} of the author, different methods
have been applied to deal with them in studying zero-entropy systems with various weak specification-like properties.
Our proof of Theorem \ref{zeroent} is made possible by taking advantage of a sequence of notions in topological
dynamics, among which the most important one is uniform rigidity 
(see Section \ref{unirigid}).

In addition to Theorem \ref{zeroent}, we can show that a minimal system with the gluing orbit property must have zero topological entropy.
Consequently, we have the following theorem that 
summarizes our results.

\begin{theorem}\label{equiconds}
Let $(X, f)$ be a system with the gluing orbit property. 
The following are equivalent:

\begin{enumerate}
	\item $(X,f)$ has zero topological entropy. 
	\item $(X,f)$ is minimal.
	\item $(X,f)$ is equicontinuous.
	\item $(X,f)$ is uniformly rigid.
	\item $(X,f)$ is uniquely ergodic.

\end{enumerate}

\end{theorem}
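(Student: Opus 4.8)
The plan is to read Theorem~\ref{equiconds} as a refinement of Theorem~\ref{zeroent}. By Theorem~\ref{zeroent}, for a system with the gluing orbit property condition~(1) is equivalent to $(X,f)$ being topologically conjugate to a minimal rotation on a compact abelian group; call this statement $(\ast)$. It therefore suffices to prove that $(\ast)$ implies each of (2)--(5) and that each of (2)--(5) implies~(1). The implications from $(\ast)$ are the standard properties of minimal rotations: a minimal rotation on a compact abelian group $G$ is minimal by construction, equicontinuous because the translations are isometries for a translation-invariant metric, uniformly rigid because the cyclic subgroup generated by the rotation vector accumulates at $0$ (so there are $n_k\to\infty$ with $f^{n_k}\to\id$ uniformly), and uniquely ergodic because Haar measure is the unique invariant measure. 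So the content lies in the four implications into~(1).

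Two of these are soft. The implication $(3)\Rightarrow(1)$ is the classical fact that an equicontinuous system has zero topological entropy. For $(4)\Rightarrow(1)$, if $f$ is uniformly rigid along $n_k\to\infty$, then for every $f$-invariant Borel probability measure $\mu$ one has $\mu(f^{-n_k}A\,\triangle\,A)\to 0$ for all measurable $A$; hence $\mu$ is rigid and, for any finite partition $P$, $h_\mu(f,P)\le H_\mu(P\mid f^{-n_k}P)\to 0$ by continuity of conditional entropy in the partition metric, so $h_\mu(f)=0$. By the variational principle $h_{\mathrm{top}}(f)=0$. Neither of these uses the gluing orbit property.

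For $(5)\Rightarrow(1)$ the gluing orbit property enters. Suppose $(X,f)$ has the gluing orbit property and $h_{\mathrm{top}}(f)>0$. Picking, by the variational principle, an ergodic measure with positive entropy and then using the gluing orbit property to concatenate long orbit segments that are mutually separated---abundant because $h_{\mathrm{top}}(f)>0$---with variable but controlled gaps, one realizes ergodic measures of entropy arbitrarily close to any value in $[0,h_{\mathrm{top}}(f))$; this is the intermediate entropy property. In particular $(X,f)$ carries more than one ergodic measure and so is not uniquely ergodic. Hence (5) forces $h_{\mathrm{top}}(f)=0$, i.e.\ (1).

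The crux is $(2)\Rightarrow(1)$: a minimal system with the gluing orbit property has zero topological entropy. The approach I would take is to show that minimality together with the gluing orbit property already forces uniform rigidity, whereupon $(4)\Rightarrow(1)$ concludes. Concretely, given $\ep>0$, use the gluing orbit property with the gap bound attached to $\ep$ to concatenate a long piece of an orbit with (a translate of) itself; minimality, i.e.\ syndeticity of return times, makes such a glued orbit available, and a compactness argument ranging over all base points should promote the resulting approximate periodicity to a uniform bound $\sup_{x\in X}d(f^{m}x,x)<\ep$ for some $m=m(\ep)$, with $m(\ep)\to\infty$ as $\ep\to0$. I expect the main obstacle to be exactly the uniformity of the ``period'' $m$ over all of $X$: the variable gaps intrinsic to the gluing orbit property---the difficulty emphasized in the introduction---prevent a direct argument, and overcoming it is where the uniform-rigidity toolkit of Section~\ref{unirigid} is needed. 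Once $(2)\Rightarrow(1)$ is established, every one of (2)--(5) is sandwiched between $(\ast)\Leftrightarrow(1)$ and~(1), so all five conditions are equivalent.
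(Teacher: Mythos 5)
Your overall plan — peel off the soft implications and concentrate on $(2)\Rightarrow(1)$ — matches the paper's logic, and your treatment of $(\ast)\Rightarrow(2),(3),(4),(5)$ and of $(3)\Rightarrow(1)$, $(4)\Rightarrow(1)$ is fine (the paper cites \cite[Proposition 6.3]{GM} for $(4)\Rightarrow(1)$ in Proposition~\ref{propurs}, and your rigid-measure argument is a standard way to see it). But the two places where the gluing orbit property must do real work are exactly where your sketch has gaps, and in both places the paper does something structurally different.

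For $(2)\Rightarrow(1)$, you propose to show $(2)\Rightarrow(4)$ and then invoke $(4)\Rightarrow(1)$. The paper never proves $(2)\Rightarrow(4)$. It proves $(2)\Rightarrow(1)$ by a different mechanism: assuming $h(f)>0$, it builds, for any $\beta,\eta>0$, a \emph{proper} compact invariant set $\Lambda$ with $h(\Lambda,f,\eta)<\beta$ (Proposition~\ref{smallepent}, via the symbolic coding of gaps in Section~\ref{minise}), which immediately contradicts minimality (Proposition~\ref{nonmini}). Your sketch of $(2)\Rightarrow(4)$ founders precisely on the point you flag: a single application of the gluing orbit property to $\{(p,n),(p,n)\}$ produces a tracing point $z$ with $f^{n+t-1}(z)$ near $z$ on a window of length $n$, but with an unknown gap $t\in\{1,\dots,M\}$ that depends on $n$ and on $z$, so the approximate period does not stabilize, and closeness holds only on a finite window rather than for all iterates. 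Lemma~\ref{lemmafollow} (the tool you gesture at) requires $d(f^n(p),f^n(f^m(p)))\le\gamma$ for \emph{all} $n\in\NN$, which a finite gluing does not give; and Lemma~\ref{lemsyn}, which would supply such an infinite-horizon estimate, \emph{assumes} uniform rigidity, so it cannot be used to establish it. The paper's separation of labor — not-rigid $\Rightarrow$ positive entropy (Proposition~\ref{unirig}, by counting separated tracing points indexed by sequences of gaps), and positive entropy $\Rightarrow$ not minimal (Proposition~\ref{nonmini}) — is what circumvents this.

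For $(5)\Rightarrow(1)$, your argument appeals to "realizing ergodic measures of entropy arbitrarily close to any value in $[0,h(f))$" from the gluing orbit property alone. The paper's Theorem~\ref{denseent} gives denseness of ergodic entropies only under the additional hypothesis of asymptotic entropy expansiveness (needed to pass from small $\ep$-entropy of $\Lambda$ to small entropy of $\Lambda$, via Proposition~\ref{hlocest}), so this route as written does not close. The paper instead cites $(5)\Rightarrow(2)$ from \cite{Sun19} and composes with $(2)\Rightarrow(1)$. If you want a self-contained alternative, note that once $(2)\Rightarrow(1)$ is in hand, $h(f)>0$ yields a proper compact invariant $\Lambda$; $\Lambda$ carries an ergodic measure $\mu_1$, and by entropy denseness \cite{PS} there is an ergodic $\mu_2$ with $h_{\mu_2}(f)$ close to $h(f)$, but one still has to argue $\mu_2\ne\mu_1$, which again wants a bound on $h(\Lambda,f)$ rather than merely on $h(\Lambda,f,\eta)$; so citing $(5)\Rightarrow(2)$ is the cleaner route.
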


\begin{figure}[h]
	\label{figcons}
	\begin{tikzpicture}
	
	\node (c1) at (0,0) {$(1)$};
	\node (c2) at (3,0) {$(2)$};
	\node (c5) at (6,0) {$(5)$};
	\node (c4) at (0,-2) {$(4)$};
	\node (c3) at (3,-2) {$(3)$};
	\draw[->, thick,] ($(c1.east)+(0,0.1)$) -- ($(c2.west)+(0,0.1)$);
	\draw[<-, thick,] ($(c1.east)+(0,-0.1)$) -- ($(c2.west)+(0,-0.1)$);
	\draw[->, thick,] (c1.south) -- (c4.north);
	\draw[->, thick,] (c4) -- (c3);
	\draw[->, thick,] (c3) -- (c1);
	\draw[->, thick,] (c5) -- (c2);
	\draw[->, thick,] (c3) -- (c5) node[tubnode] {with $(2)$};
	\end{tikzpicture}
		\caption{Conditions in Thereom \ref{equiconds}}
\end{figure}

Our proof of the relations between the conditions in Theorem \ref{equiconds}
is illustrated in
Figure \ref{figcons}.
It is well-known that $(3)\implies (1)$ and $(2)(3)\implies(5)$. We have shown in
\cite{Sun19} that $(1)\implies (2)$ and $(5)\implies(2)$.
We shall prove that $(1)\implies (4)$ in Section \ref{zerour} 
 and $(4)\implies(3)$ in Section \ref{unirigeq}. 
Finally, we prove that $(2)\implies (1)$ 
in Section \ref{minizero}.

We say that a system is trivial if 
it consists of a single periodic orbit. As a corollary of Theorem \ref{equiconds}, 
every non-trivial
system with the gluing orbit property that is either non-invertible or symbolic
must have positive topological entropy. 
Furthermore, any non-trivial minimal
subshift, e.g. the strictly ergodic subshifts 
constructed in \cite{GH, HK}, 
does not have the gluing orbit property,
regardless of its entropy.

It is shown in \cite{BeTV} that $C^0$ generic systems
satisfy the gluing orbit property on isolated chain-recurrent classes.
As a complement to this result,
we are aware of one-parameter families in which generic systems are minimal
and fail to satisfy the gluing orbit property (on their 
unique chain recurrent class,
the whole space).
This is also a consequence of Theorem \ref{equiconds}.
See Section \ref{herman}.

To show that minimality implies zero entropy for systems
with the gluing orbit property (Proposition
\ref{nonmini}), we construct a compact invariant subset with small 
$\ep$-entropy.
Along with asymptotic entropy expansiveness and the
entropy denseness proved in \cite{PS}, this leads to 
a partial positive answer to the conjecture of  Katok 
that every $C^2$ diffeomorphism has ergodic measures of arbitrary intermediate  entropies.
Partial results on the conjecture have been discussed in 
\cite{GSW, KKK, QS, Sun09, Sun10, Sun12, Ures}.
See Section \ref{denseme}.

\begin{theorem}\label{denseent}
Let $(X,f)$ be an asymptotically entropy expansive system with the gluing orbit property. Then the set
$$\cE(f):=\{h_\mu(f):\text{ $\mu$ is an ergodic measure for $(X,f)$}\}$$
is dense in the interval $[0,h(f)]$.
\end{theorem}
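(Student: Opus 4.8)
The plan is to prove that $[0,h(f)]\subseteq\overline{\cE(f)}$, treating the endpoints essentially for free and then filling in the interior by a horseshoe-type construction driven by the gluing orbit property. That $h(f)\in\overline{\cE(f)}$ is immediate: by the variational principle there are invariant measures of entropy arbitrarily close to $h(f)$, and the ergodic decomposition upgrades any such measure to an ergodic one of entropy at least as large; in fact, since $(X,f)$ is asymptotically entropy expansive the entropy map $\mu\mapsto h_\mu(f)$ is upper semicontinuous on the (compact) set of invariant measures, so the supremum is attained and $h(f)\in\cE(f)$. The left endpoint $0$ will fall out of the interior argument by letting the target value tend to $0$, so the real content is to show that every $c\in(0,h(f))$ lies in $\overline{\cE(f)}$.

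\emph{The construction.} Fix $c\in(0,h(f))$ and a small $\delta>0$. Using the variational principle (or the entropy denseness of ergodic measures from \cite{PS}) choose an ergodic measure $\mu$ with $a:=h_\mu(f)>c$, and set $\lambda:=c/a\in(0,1)$. Fix a small $\ep>0$ and let $M=M(\ep)$ be the gap bound from the gluing orbit property. By Katok's entropy formula, for every large $n$ there is an $(n,\ep)$-separated set $E_n$ of $\mu$-typical orbit segments with $|E_n|\ge e^{n(a-\delta)}$; also fix one ``rigid'' orbit segment $\gamma$ of length $n$ (say along the orbit of an arbitrary point). Using the gluing orbit property — which imposes no compatibility constraint on the pieces — I build a compact $f$-invariant set $Y=Y(\ep,n)\subseteq X$ consisting of all orbits obtained by concatenating, in a fixed periodic pattern with gaps at most $M$, blocks of length $n$ in which a proportion $\approx\lambda$ are chosen freely from $E_n$ and the rest equal $\gamma$. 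Counting the free choices over one period gives a lower bound $h(Y,f)\ge\dfrac{\lambda\,n(a-\delta)}{n+M}$, which exceeds $c-\delta$ once $n$ is large enough that $M/n$ and the gap contributions are negligible.

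\emph{The upper bound is the crux.} One must show $h(Y,f)\le c+\delta$. First, bound the scale-$\ep$ entropy of $Y$: every orbit in $Y$ lies within $\ep$ of a concatenation whose only degrees of freedom are the free blocks together with the bounded (hence asymptotically negligible) positions and shadowing data in the gaps, so the number of $(n,\ep)$-separated points in $Y$ grows no faster than $e^{n(\lambda a+o(1))}$. This is exactly the ``compact invariant set of small $\ep$-entropy'' mechanism used in the proof of Proposition \ref{nonmini}, now run with a prescribed positive density of high-complexity blocks. Second — and this is the genuinely essential step — upgrade this scale-$\ep$ estimate to a bound on $h(Y,f)$: a priori the gluing orbit property only controls orbits at scale $\ep$, so $Y$ could carry extra complexity at finer scales. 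Here asymptotic entropy expansiveness is used in an essential way: its tail (defect) entropy tends to $0$ as the scale $\ep'\to0$, so carrying the separated-set count out to all small scales shows the fine-scale contribution is uniformly negligible and $h(Y,f)$ is governed by the scale-$\ep$ count. Combining the two estimates, $|h(Y,f)-c|<\delta$; applying the variational principle and the ergodic decomposition inside $Y$ produces an ergodic $\nu$ supported on $Y$ with $h_\nu(f)\in(c-2\delta,c+\delta)$. Letting $\delta\to0$ gives $c\in\overline{\cE(f)}$, and letting $c\to0$ gives $0\in\overline{\cE(f)}$; together with $h(f)\in\overline{\cE(f)}$ this yields the density of $\cE(f)$ in $[0,h(f)]$.

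I expect the main obstacle to be precisely this two-scale upper bound on $h(Y,f)$: arranging the gluing so that the rigid blocks genuinely suppress entropy while the bounded variable gaps do not secretly reintroduce it (which forces $n\to\infty$), and then converting the scale-$\ep$ bound into a true topological entropy bound, for which asymptotic entropy expansiveness is indispensable. Everything else — Katok's formula, the counting lower bound, and the passage to an ergodic measure via variational principle and ergodic decomposition — is routine.
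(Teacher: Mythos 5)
Your proposal takes a genuinely different route from the paper's. The paper never constructs a compact invariant set of \emph{prescribed} entropy. Instead, Lemma~\ref{smallent} produces only a compact invariant set $\Lambda$ of arbitrarily \emph{small} topological entropy (running the construction of Section~\ref{minise} with long block length $\tau$ and controlling the defect through Bowen's inequality, Proposition~\ref{hlocest}). The interpolation to intermediate values then happens at the level of measures rather than sets: one takes an invariant measure supported on $\Lambda$ (low entropy) and one on $X$ (high entropy), forms a convex combination $\nu_0$ with $h_{\nu_0}(f)=\frac{h+h'}{2}$ using the affinity of $\mu\mapsto h_\mu(f)$, uses upper semi-continuity of the entropy map (which asymptotic entropy expansiveness provides) to get a neighborhood of $\nu_0$ on which entropies stay below $h'$, and finally applies the Pfister--Sullivan entropy denseness for gluing-orbit systems to place an ergodic measure of entropy $>h$ in that neighborhood. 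This cleanly separates the combinatorics (small $\ep$-entropy for $\Lambda$) from the interpolation and only ever needs a \emph{one-sided} bound on the constructed set, whereas you aim to pin $h(Y,f)$ from both sides near $c$, essentially re-proving a version of entropy denseness by hand.

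The genuine gap is in your lower bound $h(Y,f)\ge \lambda n(a-\delta)/(n+M)$. You tacitly assume that tracing points corresponding to distinct free-block choices are $\ep$-separated. But the gluing orbit property does not let you \emph{choose} the gap: two different free-block sequences may well be $\ep$-traced with \emph{different} gap sequences in $\Sigma_M$, after which the block positions in the two shadowing orbits no longer line up, and $(n,3\ep)$-separation of $E_n$ at aligned times gives you nothing at the shifted times. This is exactly the variable-gap difficulty the paper is built to manage (compare the two-case analysis in Lemma~\ref{lemsep}), and it cannot be waved away. The standard repair is pigeonhole: over $k$ blocks there are at most $M^k$ admissible gap sequences, so a fraction at least $M^{-k}$ of the free-block choices share a common gap and hence do align; the cost $\tfrac{\ln M}{n}$ is absorbed in your $o(1)$ as $n\to\infty$. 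Two further points you leave unaddressed: the compactness and $f$-invariance of $Y$ are not automatic (the paper spends Lemma~\ref{ycompt} on closedness of the set of tracing points and Lemma~\ref{cptinv} on saturating by $\bigcup_{j<\tau+M}f^j(\cdot)$ to obtain invariance, and uses a non-strict inequality in the tracing definition precisely to make the closedness work), and the scale-$\ep$ to full-entropy upgrade should be routed explicitly through $h(Y,f)\le h(Y,f,\ep)+h^*(f,\ep)$. With these repairs your construction would work, but the paper's indirect route is both shorter and sidesteps the two-sided pinch entirely.
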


We remark that the author has recently obtained much stronger
results than Theorem \ref{denseent}. In \cite{Sunie},
Katok's conjecture is completely proved for all systems satisfying asymptotic entropy
expansiveness and the approximate product property.
The approximate product property 
is an even weaker specification-like property
 than the gluing orbit property.
In another work \cite{Sununierg}, the author 
has also made an effort to characterize
zero-entropy systems with this property and showed that these systems must be uniquely
ergodic.
Moreover, it is shown in \cite{Sunct} 
that in Theorem \ref{denseent} the gluing
orbit property can be replaced by the decomposition introduced by Climenhaga
and Thompson \cite{CT}, which only asks the gluing orbit property 
to hold on a suitable collection of orbit segments, instead of all of them.
Such decompositions are admitted by certain DA maps \cite{CFT,
	CTmane}, for which Katok's conjecture can be verified \cite{Sunes}.

\section{Preliminaries}

Let $(X,d)$ be a compact metric space. Throughout this article, we assume that $X$ is
infinite to avoid trivial exceptions.  Let $f:X\to X$ be a continuous map. 
Then $(X,f)$ is conventionally called a \emph{topological dynamical system} or just a \emph{system}.
We shall denote by 
$\ZZ^+$ the set of all positive integers and by $\NN$ the set of all nonnegative integers, i.e.
$\NN=\ZZ^+\cup\{0\}$. For $M\in\ZZ^+$,  we denote  by
$$\Sigma_M:=\{1,2,\cdots, M\}^{\ZZ^+}$$
the space of sequences in $\{1,2,\cdots, M\}$.

\subsection{The gluing orbit property}
\begin{definition}\label{gapshadow}
We call a sequence 
$$\sC=\{(x_j,m_j)\}
_{j\in\ZZ^+}$$
of ordered pairs in $X\times\ZZ^+$
an \emph{orbit sequence}. A \emph{gap} 
for an orbit sequence
is a sequence
$$\sg=\{t_j\}_{j\in\ZZ^+}$$
of positive integers.
For $\ep>0$, we say that $(\sC,\sg)$ can be \emph{$\ep$-traced} by $z\in
X$ if the following \emph{tracing property} (illustrated in Figure \ref{figgaps}) holds:

For every $j\in\ZZ^+$,
\begin{equation}\label{eqshadow}
d(f^{s_{j}+l}(z), f^l(x_j))\le\ep\text{ for each }l=0,1,\cdots, m_j-1,
\end{equation}
where
$$s_1:=0\text{ and }s_j:=\sum_{i=1}^{j-1}(m_i+t_i-1)\text{ for }j\ge 2.$$

\end{definition}

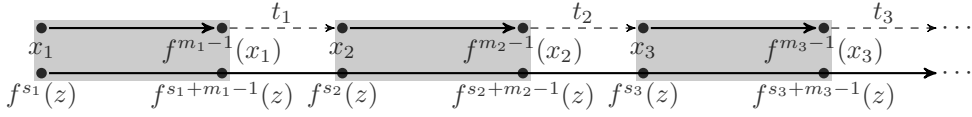
\begin{figure}[h]
	\label{figgaps}
\begin{tikzpicture}[fill opacity=0.8]

\path[fill, fill opacity=0.2,] 
(-0.1,0.1) rectangle (2.5,-0.7);
\path[fill, fill opacity=0.2,]
(3.9,0.1) rectangle (6.5,-0.7);
\path[fill, fill opacity=0.2,] 
(7.9,0.1) rectangle (10.5,-0.7);

\fill (0,0) circle (2pt);
\node at (0,-0.3) {$x_1$};
\draw [, ->, thick,] (0.1,0) -- (2.3,0);
\fill (2.4,0) circle (2pt);
\node at (2.4,-0.3) {$f^{m_1-1}(x_1)$};
\draw [, ->, dashed,] (2.5,-0) -- (3.9,-0);

\node at (3.2,0.2) {$t_1$};
\fill (4.0,0) circle (2pt);
\node at (4,-0.3) {$x_2$};
\draw [, ->, thick,] (4.1,0) -- (6.3,0);
\fill (6.4,0) circle (2pt);
\node at (6.4,-0.3) {$f^{m_2-1}(x_2)$};
\draw [, ->, dashed,] (6.5,0) -- (7.9,0);

\node at (7.2,0.2) {$t_2$};
\fill (8,0) circle (2pt);
\node at (8,-0.3) {$x_3$};
\draw [, ->, thick,] (8.1,0) -- (10.3,0);
\fill (10.4,0) circle (2pt);
\node at (10.4,-0.3) {$f^{m_3-1}(x_3)$};
\draw [, ->, dashed,] (10.5,-0) -- (11.9,-0);

\node at (11.2, 0.2) {$t_3$};
\node at (12.2,0) {$\cdots$};

\fill (0,-0.6) circle (2pt);
\node at (0,-0.9) {$f^{s_1}(z)$};
\draw [, ->, thick,] (0.1,-0.6) -- (11.9,-0.6);

\fill (2.4,-0.6) circle (2pt);
\node at (2.4,-0.9) {$f^{s_1+m_1-1}(z)$};

\fill (4.0,-0.6) circle (2pt);
\node at (4,-0.9) {$f^{s_2}(z)$};

\fill (6.4,-0.6) circle (2pt);
\node at (6.4,-0.9) {$f^{s_2+m_2-1}(z)$};

\fill (8,-0.6) circle (2pt);
\node at (8,-0.9) {$f^{s_3}(z)$};

\fill (10.4,-0.6) circle (2pt);
\node at (10.4,-0.9) {$f^{s_3+m_3-1}(z)$};

\node at (12.2,-0.6) {$\cdots$};

\end{tikzpicture}
	\caption{The tracing property}
\end{figure}

\begin{definition}\label{defgo}
	We say that $(X,f)$ 
	has \emph{the gluing orbit property},
	if for every $\ep>0$ there
	is $M=M(\ep)>0$ such that for any orbit sequence 
	$\sC$, there is a gap $\sg\in\Sigma_M$ 
	such that 
	$(\sC,\sg)$  can be $\ep$-traced.
\end{definition}

\begin{remark}
Definition \ref{gapshadow} naturally extends to finite orbit sequences.
By \cite[Lemma 2.10]{Sun19}, Definition \ref{defgo} is equivalent to the original definition
of the gluing orbit property (cf. \cite[Definition 2.8]{Sun19} or \cite[Definition 2.1]{BV}),
which requests that any finite orbit sequence can be traced.
Another difference to note is that we allow non-strict inequalities in \eqref{eqshadow}.
This provides extra convenience in Section \ref{minise} (Lemma \ref{ycompt}).

\end{remark}

For definitions and results on the other specification-like properties,
readers are referred to \cite{KLO} and \cite{Sunie}.

\subsection{Topological entropy and entropy expansiveness}
\begin{definition}
Let $K$ be a subset of $X$.
For $n\in\ZZ^+$ and $\ep>0$, a subset $E\subset K$ 
is called an \emph{$(n,\ep)$-separated set} in $K$ if
for any distinct points $x,y$ in $E$, there is $k\in\{0,\cdots,n-1\}$ such
that
$$d(f^k(x),f^k(y))>\ep.$$
Denote by $s(K,n,\ep)$ the maximal cardinality of $(n,\ep)$-separated subsets
of $K$. 
We define the 
\emph{$\ep$-entropy} as
$$h(K,f,\ep):=\limsup_{n\to\infty}\frac{\ln s(K,n,\ep)}{n}.$$
Then the \emph{topological entropy} of $f$ on $K$ is given by 
$$h(K,f):=\lim_{\ep\to0}h(K,f,\ep).$$
In particular, $h(f):=h(X,f)$ is the topological entropy of the system $(X,f)$.
\end{definition}

\begin{remark}
Note that $h(K,f,\ep)$ grows as $\ep$ tends to $0$. So we actually have
$$h(K,f)=\sup\{h(K,f,\ep):\ep>0\}.$$
\end{remark}

Readers are referred to \cite{Walters} for more discussions on topological entropy, as well as many other background materials.

\begin{definition}
For $\ep>0$ and $x\in X$, denote
$$\Gamma_\ep(x):=\{y\in X:d(f^n(x),f^n(y))<\ep\text{ for every }n\in\NN\}.$$
Let
$$h^*(f,\ep):=\sup\{h(\Gamma_\ep(x),f):x\in X\}.$$
\begin{enumerate}
\item We say that $(X,f)$ is \emph{entropy expansive} if there is $\ep_0>0$
such that $h^*(f,\ep_0)=0$.
\item 
We say that $(X,f)$ is \emph{asymptotically entropy expansive} if
$$\lim_{\ep\to 0}h^*(f,\ep)=0.$$
\end{enumerate}
\end{definition}

\begin{proposition}[{\cite[Corollary 2.4]{Bowen2}}]\label{hlocest}
For every subset $K$ and every $\ep>0$, we have
$$h(K,f)\le h(K,f,\ep)+h^*(f,\ep).$$
\end{proposition}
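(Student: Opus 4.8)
The plan is to derive the inequality from a two-scale multiplicative estimate for separated sets, followed by a local entropy bound on Bowen balls. Since $h(K,f)=\sup_{\delta>0}h(K,f,\delta)$, and since for $\delta\ge\ep$ every $(n,\delta)$-separated set is $(n,\ep)$-separated, so that $h(K,f,\delta)\le h(K,f,\ep)$ trivially, it suffices to prove that $h(K,f,\delta)\le h(K,f,\ep)+h^*(f,\ep)$ whenever $0<\delta<\ep$. (Throughout I would work with the closed $\ep$-balls $\{y:d(f^ky,f^kx)\le\ep\}$, as in \cite{Bowen2}; the distinction from the open ones is immaterial for the uses made of this proposition.)

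First I would fix $n\in\ZZ^+$ and a maximal $(n,\ep)$-separated subset $C\subset K$, so that $|C|=s(K,n,\ep)$. By maximality every $y\in K$ satisfies $d(f^ky,f^kc)\le\ep$ for $0\le k<n$ for some $c\in C$, i.e. $K=\bigcup_{c\in C}B_n(c)$ with $B_n(c):=\{y\in X:d(f^ky,f^kc)\le\ep,\ 0\le k<n\}$ the closed $(n,\ep)$-Bowen ball. For an arbitrary $(n,\delta)$-separated $E\subset K$, each $E\cap B_n(c)$ is $(n,\delta)$-separated in $B_n(c)$, hence $|E|\le\sum_{c\in C}s(B_n(c),n,\delta)\le s(K,n,\ep)\cdot\sup_{x\in X}s(B_n(x),n,\delta)$. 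Taking $\tfrac1n\log$, letting $n\to\infty$, and splitting the $\limsup$, everything reduces to the local estimate
$$\limsup_{n\to\infty}\frac1n\log\Big(\sup_{x\in X}s\big(B_n(x),n,\delta\big)\Big)\le h^*(f,\ep).$$

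The hard part will be exactly this bound; it is Bowen's key lemma in disguise. The idea is that $\bigcap_n B_n(x)=\overline{\Gamma_\ep(x)}$, a set with $h(\overline{\Gamma_\ep(x)},f)\le h^*(f,\ep)$, but one must control $s(B_n(x),n,\delta)$ at finite times and \emph{uniformly in the moving center} $x$, while $B_n(x)\supsetneq\overline{\Gamma_\ep(x)}$. I would proceed in two moves. (i) \emph{Blocking}: cut $\{0,\dots,n-1\}$ into $\lceil n/L\rceil$ blocks of a fixed large length $L$ (the last block possibly shorter); since the $iL$-th iterate of any $y\in B_n(x)$ lies in the window set $W_i:=\{w:d(f^kw,f^{iL+k}x)\le\ep,\ 0\le k<L\}$, matching an $(n,\delta)$-separated set block by block against minimal $(L,\delta/3)$-spanning sets of the $W_i$ produces an injection into a product of $\lceil n/L\rceil$ factors each of size at most $R_L:=\sup_{z\in X}r(\{w:d(f^kw,f^kz)\le\ep,\ k<L\},L,\delta/3)<\infty$ (two points with the same record are $(n,2\delta/3)$-close, hence equal); hence the quantity above is $\le\frac1L\log R_L$ for every $L$. (ii) \emph{Compactness}: show $\limsup_{L\to\infty}\frac1L\log R_L\le h^*(f,\ep)$, using that $z\mapsto\overline{\Gamma_\ep(z)}$ is upper semicontinuous (if $z_j\to z$, $y_j\in\overline{\Gamma_\ep(z_j)}$, $y_j\to y$, then $d(f^ky,f^kz)=\lim_j d(f^ky_j,f^kz_j)\le\ep$ for each fixed $k$) together with $h(\overline{\Gamma_\ep(z)},f)\le h^*(f,\ep)$ for each $z$, in order to pass from the finite-time window sets to the infinite-time sets uniformly over $z$. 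This uniform passage from finite to infinite time is the only genuinely delicate point; granting it, combining the local estimate with the multiplicative estimate and the trivial case $\delta\ge\ep$ finishes the proof via $h(K,f)=\sup_{\delta}h(K,f,\delta)$.
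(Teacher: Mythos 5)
The paper does not actually prove this proposition: it is quoted from Bowen's \emph{Entropy-expansive maps} (Corollary 2.4 there), so there is no in-text proof to compare against. Your outline does recreate Bowen's architecture faithfully: a multiplicative estimate that covers $K$ by $(n,\ep)$-Bowen balls centered at a maximal $(n,\ep)$-separated set, followed by a bound on the number of $(n,\delta)$-separated points one can fit inside a single moving Bowen ball, with the latter ultimately controlled by $h^*(f,\ep)$ via compactness. The reduction to $0<\delta<\ep$, the covering step, and the block-by-block injection in (i) are all correct.

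The gap is in step (ii), and it is not a small one: it is the mathematical heart of the result, and you explicitly defer it (``granting it''). Upper semicontinuity of $z\mapsto\overline{\Gamma_\ep(z)}$ plus the pointwise bound $h(\overline{\Gamma_\ep(z)},f)\le h^*(f,\ep)$ do not by themselves control $\tfrac1L\log R_L$. The difficulty is that a point $w$ with $d(f^kw,f^kz)\le\ep$ only for $k<L$ need not lie anywhere near $\overline{\Gamma_\ep(z)}$, so semicontinuity of the infinite-time sets says nothing directly about the $(L,\delta/3)$-spanning numbers of the finite-time windows. What Bowen actually proves is a uniform finite-time statement: for every $\alpha>0$ there is a single $m$ such that $r(B_m(z,\ep),m,\delta/3)\le e^{m(h^*(f,\ep)+\alpha)}$ for \emph{all} $z$ at once. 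Obtaining it requires, for each $z$, a finite $(m(z),\delta/3)$-cover of $\Gamma_\ep(z)$ of controlled size; using that the nested closed balls $B_L(z,\ep)$ decrease to $\Gamma_\ep(z)$ to push that cover out to a finite time $L(z)$; using upper semicontinuity of $z'\mapsto B_{L(z)}(z',\ep)$ to make the same cover work on a neighborhood $V_z$; extracting a finite subcover of $X$; and then reconciling the finitely many mismatched window lengths $m(z_i)$, $L(z_i)$ with one more submultiplicative pass. None of that is in the sketch, so as written the proposal reduces the proposition to an unproved lemma that is, in substance, the proposition itself.
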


\subsection{Almost periodicity and Equicontinuity}

\begin{definition}
A set $A\subset\NN$ is called \emph{syndetic} if there is $L>0$ such that
$$A\cap[n, n+L-1]\ne\emptyset\text{ for every }n\in\NN.$$
\end{definition}

\begin{definition}
For $x\in X$ and $\ep>0$, denote
$$R(x,\ep):=\{n\in\ZZ^+: d(f^n(x),x)\le\ep\}.$$
A point $x\in X$ is called \emph{almost periodic} in $(X,f)$ if
$R(x,\ep)$ is syndetic for every $\ep>0$.
\end{definition}

Denote by $C(X,X)$ the space of all
continuous maps from $X$ to itself, equipped with the $C^0$-metric
$$D^0(f,g):=\sup_{x\in X}d(f(x),g(x)).$$

\begin{definition}\label{rectimes}
For $\ep>0$, denote
\begin{equation*}
R(\ep):=\bigcap_{x\in X}R(x,\ep)=\{n\in\ZZ^+: D^0(f^n, \id)
\le\ep\},
\end{equation*}
where $\id:X\to X$ is the identity map.
We say that $(X,f)$ is 
\emph{uniformly almost periodic} if
$R(\ep)$ is syndetic for every $\ep>0$.
\end{definition}

\begin{definition}
        We say that $(X,f)$ is  
\emph{equicontinuous} if 
        the family $\{f^n\}_{n=0}^\infty$ is equicontinuous, i.e.
        for every $\ep>0$, there is $\delta>0$ such that for any $x,y\in X$ 
        with $d(x,y)<\delta$, we have
        $$d(f^n(x),f^n(y))<\ep\text{ for every $n\in\NN$}.$$
\end{definition}

\begin{proposition}[{\cite[Theorem 4.38]{GH}}]\label{ecuap}
$(X,f)$ is equicontinuous if 
it is uniformly almost periodic.
\end{proposition}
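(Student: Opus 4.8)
The plan is to prove equicontinuity directly, by showing that uniform almost periodicity forces the family $\{f^n\}_{n\ge0}$ to be ``finitely approximable'' in the $C^0$-metric $D^0$. Fix $\ep>0$ and set $\eta:=\ep/3$. By hypothesis $R(\eta)$ is syndetic, so there is $L>0$ with $R(\eta)\cap[k,k+L-1]\ne\emptyset$ for every $k\in\NN$. The goal is to show that the finitely many maps $f^0,f^1,\dots,f^{L-1}$ already $D^0$-approximate every $f^n$ up to $\eta$, and then conclude by a routine three-term triangle inequality using the uniform continuity of these finitely many maps (which holds since $X$ is compact).

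The heart of the argument is the following estimate: for any $n\in\NN$ and any $m\in R(\eta)$ with $m\le n$, one has $D^0(f^n,f^{n-m})\le\eta$. To see this, write $f^n=f^m\circ f^{n-m}$; then for every $x\in X$, putting $y:=f^{n-m}(x)$ gives $d(f^n(x),f^{n-m}(x))=d(f^m(y),y)\le D^0(f^m,\id)\le\eta$. I expect the genuine obstacle here to be noticing the correct order of composition: the naive decomposition $f^n=f^{n-m}\circ f^m$ is useless, because although $f^m(x)$ and $x$ are $\eta$-close, the map $f^{n-m}$ may expand that distance by an uncontrolled amount (and, as one can check, no choice of $\eta$ versus the syndeticity constant $L(\eta)$ fixes this). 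By contrast, \emph{post}-composing with $f^m$ moves every point by at most $D^0(f^m,\id)\le\eta$, no matter what the inner iterate is, so there is no amplification at all.

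From this, $\{f^0,\dots,f^{L-1}\}$ is $\eta$-dense in $\{f^n\}_{n\ge0}$ for $D^0$: if $n\le L-1$ take $f^n$ itself, and if $n\ge L$ choose $m\in R(\eta)\cap[n-L+1,n]$, so that $m\le n$ and $n-m\in\{0,\dots,L-1\}$, whence $D^0(f^n,f^{n-m})\le\eta$. Now, because $f^0,\dots,f^{L-1}$ are finitely many uniformly continuous maps, pick $\delta>0$ such that $d(x,y)<\delta$ implies $d(f^r(x),f^r(y))<\eta$ for all $r\in\{0,\dots,L-1\}$. Then for any $x,y$ with $d(x,y)<\delta$ and any $n\ge0$, choosing $r\in\{0,\dots,L-1\}$ with $D^0(f^n,f^r)\le\eta$ yields
$$d(f^n(x),f^n(y))\le d(f^n(x),f^r(x))+d(f^r(x),f^r(y))+d(f^r(y),f^n(y))\le\eta+\eta+\eta=\ep,$$
which is exactly equicontinuity. (Equivalently, one could phrase the argument as: $\{f^n\}_{n\ge0}$ is totally bounded, hence relatively compact, in $(C(X,X),D^0)$, and then invoke Arzelà--Ascoli; but the elementary route above avoids that machinery and uses nothing beyond compactness of $X$ and the definitions of $R(\eta)$ and syndeticity.)
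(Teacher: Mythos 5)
Your proof is correct and follows essentially the same route as the paper's: decompose $f^n = f^m\circ f^{n-m}$ with the recurrence-time iterate $f^m$ on the \emph{outside}, then pass through $f^{n-m}$ (with $n-m$ ranging over a fixed finite window) via a three-term triangle inequality and uniform continuity of finitely many iterates. The only cosmetic difference is that you isolate the observation $D^0(f^n,f^{n-m})\le\eta$ as a total-boundedness statement for $\{f^n\}$ in $(C(X,X),D^0)$ before applying the triangle inequality, whereas the paper carries out the same estimate in one step; your remark about why post-composing (rather than pre-composing) with $f^m$ is the right move is a nice explanation of the point that both proofs silently rely on.
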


\begin{remark}
If $f$ is surjective, then equicontinuity also implies uniform almost periodicity.
\end{remark}

We give a proof of Proposition \ref{ecuap} for completeness.

\begin{proof}
Assume that $f$ is uniformly almost periodic. For every $\ep>0$,
 $R(\frac\ep3)$ is syndetic.
There is $L\in\ZZ^+$ such that 
$$R(\frac\ep3)\cap[n, n+L-1]\ne\emptyset\text{ for every }n\in\NN.$$
Note that $f^k$ is uniformly continuous for each $k$.
So there is $\delta>0$ such that
$$d(f^k(x),f^k(y))<\frac\ep3\text{ whenever $d(x,y)<\delta$, for each }k=1,2,\cdots,L.$$
For any $n\in\ZZ^+$, $n>L$, 
there is $j\in R(\frac\ep3)\cap[n-L,n-1]$ and $k\in\{1,2,\cdots,
L\}$ such that $n=j+k$.
Then
$$d(f^j(x),x)\le\frac\ep3\text{ for every }x\in X.$$
So we have
\begin{align*}
d(f^n(x),f^n(y))&=d(f^{j+k}(x),f^{j+k}(y))
\\&\le d(f^j(f^k(x)),f^k(x))+d(f^j(f^k(y)),f^k(y))+d(f^k(x),f^k(y))
\\&<\ep\text{ whenever $d(x,y)<\delta$}.
\end{align*}
Hence $(X,f)$ is equicontinuous.
\end{proof}

If $(X,f)$ is equicontinuous, then the cardinality $s(X,n,\ep)$ is uniformly
bounded for all $n$. This leads to the following well-known fact.

\begin{proposition}\label{eqzeroent}
Every equicontinuous system has zero topological entropy.
\end{proposition}

\subsection{Transitivity and minimality}

\begin{definition}
Denote by
$$O(x):=\{f^n(x):n\in\NN\}$$
the (forward) orbit of $x\in X$ under $f$.
Denote by
$$\tran:=\{x\in X: \overline{O(x)}=X\}$$
the set of all points with dense orbits.
Then
\begin{enumerate}
\item We say that $(X,f)$ is \emph{topologically transitive} if $\tran$ is
dense in $X$.
This is equivalent to the standard definition which states that 
for any two nonempty open sets $U$ and $V$, there is $n\in\NN$
such that $f^n(U)\cap V\ne\emptyset$.
\item We say that $(X,f)$ is \emph{minimal} if $\tran=X$. Equivalently,
there is no nonempty proper compact invariant subset of $X$.
\end{enumerate}
\end{definition}

\begin{proposition}[{\cite{ST}}]\label{goprop}
	Let $(X,f)$ be a topological dynamical system with the gluing orbit property. Then $(X,f)$ is topologically transitive.
\end{proposition}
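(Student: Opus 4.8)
The plan is to verify the open-set formulation of topological transitivity recorded in the definition above: given any two nonempty open sets $U,V\subseteq X$, I will produce $n\in\NN$ with $f^n(U)\cap V\ne\emptyset$; the density of $\tran$ then follows from the equivalence stated there, which is valid since $X$ is compact metric.

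First I would pick points $x\in U$ and $y\in V$ and choose $\ep>0$ so small that every point within distance $\ep$ of $x$ lies in $U$ and every point within distance $\ep$ of $y$ lies in $V$ (possible because $U,V$ are open and the tracing property in \eqref{eqshadow} uses the non-strict inequality $\le\ep$). Then I would apply the gluing orbit property for this $\ep$ to obtain $M=M(\ep)$, and feed it the orbit sequence $\sC=\{(x_j,m_j)\}_{j\in\ZZ^+}$ defined by $(x_1,m_1)=(x,1)$, $(x_2,m_2)=(y,1)$, and $(x_j,m_j)=(y,1)$ for $j\ge 3$ (the tail is irrelevant; alternatively one may use the finite-orbit-sequence version of the gluing orbit property noted in the remark, applied to the length-two sequence $\{(x,1),(y,1)\}$, which avoids padding).

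The gluing orbit property then yields a gap $\sg=\{t_j\}\in\Sigma_M$ and a point $z\in X$ that $\ep$-traces $(\sC,\sg)$. Unwinding the definition with all $m_i=1$: from $j=1$ and $s_1=0$ we get $d(z,x)\le\ep$, so $z\in U$; from $j=2$ and $s_2=m_1+t_1-1=t_1$ we get $d(f^{t_1}(z),y)\le\ep$, so $f^{t_1}(z)\in V$. Hence $f^{t_1}(z)\in f^{t_1}(U)\cap V$, and since $t_1\in\{1,\dots,M\}\subseteq\NN$ we have found the desired $n=t_1$.

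There is essentially no obstacle here; the only point requiring a moment's care is reading off the shift $s_2=t_1$ from the formula $s_j=\sum_{i=1}^{j-1}(m_i+t_i-1)$ when every $m_i$ equals $1$, and noting that the single gap value $t_1$ is automatically positive so that $n\in\NN$.
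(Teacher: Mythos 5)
Your proof is correct. The paper itself does not supply an argument for this proposition but cites it to \cite{ST}, so there is no in-paper proof to compare against; your argument is the standard one and is exactly what one would expect. The computation of $s_2=m_1+t_1-1=t_1$ is right, $z\in U$ and $f^{t_1}(z)\in V$ follow from the choice of $\ep$ so that the closed $\ep$-balls around $x$ and $y$ lie in $U$ and $V$ respectively, and $t_1\ge 1$ gives the required $n\in\NN$. Your parenthetical observation that one may equally well invoke the finite-orbit-sequence form of the gluing orbit property (justified by the remark following Definition \ref{defgo}) is the cleaner route and avoids the harmless but slightly awkward padding of the orbit sequence.
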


A system that is both equicontinuous and minimal is quite simple.
Such a system can be characterized in various ways. The following proposition
provides an incomplete summary. 
See also \cite[Chapter 3]{Aus} and \cite[Theorem 4]{HN}.

\begin{proposition}[{\cite[Theorem 3.1.7]{YHS}}]
\label{miniec}
Let $(X,f)$ be a topological dynamical system. The following are equivalent:
\begin{enumerate}
\item $(X,f)$ is equicontinuous and minimal.
\item $(X,f)$ is equicontinuous and topologically transitive.
\item $(X,f)$ is topologically conjugate to a minimal isometry.
\item $(X,f)$ is topologically conjugate to a minimal rotation
on a compact abelian group.
\item $(X,f)$ is minimal and it has (topologically) discrete spectrum, i.e.
the continuous eigenfunctions
$$\{\phi\in C(X):\phi\circ f=\lambda\phi\text{ for some }\lambda\in\CC\}$$
span $C(X)$.
\end{enumerate}
\end{proposition}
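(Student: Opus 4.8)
The statement to be proved is Proposition \ref{miniec}, which characterizes equicontinuous minimal systems. Let me sketch a proof plan.

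\medskip

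The plan is to prove the cycle of implications $(1)\implies(3)\implies(4)\implies(1)$, handle $(2)$ by showing $(2)\iff(1)$, and treat $(5)$ separately via spectral theory.

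First I would show $(1)\implies(3)$. Given that $(X,f)$ is equicontinuous and minimal, the key idea is to use the equicontinuity to produce an $f$-invariant metric. Concretely, for $x,y\in X$ define $\rho(x,y):=\sup_{n\in\NN}d(f^n(x),f^n(y))$. Equicontinuity guarantees that $\rho$ induces the same topology as $d$ (the uniformity is the point: $d(x,y)$ small forces $\rho(x,y)$ small), and $\rho$ is manifestly $f$-contracting: $\rho(f(x),f(y))\le\rho(x,y)$. To upgrade this to an isometry I would use minimality together with surjectivity of $f$ on a minimal system: if $f$ were not $\rho$-isometric somewhere, one can iterate and use compactness/minimality (a recurrence argument along a syndetic return-time set, as in the proof of Proposition \ref{ecuap}) to force $\rho(f^{n_k}(x),f^{n_k}(y))\to\rho(x,y)$ from below while staying strictly below, contradicting that $f^{n_k}\to\id$ uniformly along a subsequence. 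Hence $f$ is an isometry of $(X,\rho)$, and it is minimal, giving $(3)$.

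Next, $(3)\implies(4)$ is the structural heart: a minimal isometry on a compact metric space is conjugate to a minimal rotation on a compact abelian group. Here I would invoke the standard Ellis semigroup / enveloping-group argument: the closure $G:=\overline{\{f^n:n\in\NN\}}$ in the group of isometries of $(X,\rho)$ (with the $C^0$-topology) is a compact topological group, it acts on $X$, and minimality of $f$ forces this action to be transitive, so $X\cong G/H$ for the stabilizer $H$; one then checks $H$ is normal (using that $f$ is central in $G$ and generates a dense subgroup), so $X\cong G/H$ is itself a compact group on which $f$ acts as translation by a topologically-generating element — precisely a minimal rotation. Then $(4)\implies(1)$ is immediate: a rotation on a compact metric abelian group is an isometry for a translation-invariant metric, hence equicontinuous, and it is minimal iff the rotation element generates a dense subgroup, which is the hypothesis. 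The equivalence $(1)\iff(2)$ follows because on any system with dense orbit set, minimality and transitivity coincide once one has equicontinuity: an equicontinuous transitive system is minimal (the orbit closure of any point is all of $X$ because, by equicontinuity, distances along orbits are controlled uniformly, so a dense orbit forces every orbit to be dense). Finally, $(5)\iff(4)$ is classical: on a compact abelian group the characters are continuous eigenfunctions of any rotation and separate points, hence span $C(X)$ by Stone–Weierstrass, giving $(4)\implies(5)$; conversely, if the continuous eigenfunctions span $C(X)$ and the system is minimal, the Gelfand spectrum of the closed subalgebra they generate (which is all of $C(X)$) is a compact group realizing $X$ as a rotation — this is the Halmos–von Neumann discrete spectrum theorem in its topological form.

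The main obstacle I anticipate is $(3)\implies(4)$: promoting "minimal isometry" to "minimal rotation on an abelian group" requires the enveloping-group machinery and a careful verification that the stabilizer is normal and that the quotient group is abelian (equivalently, that $G$ itself is abelian, which follows since $G$ is the closure of the cyclic — hence abelian — semigroup generated by $f$). Everything else is either a direct uniformity estimate (the $\rho$-metric construction) or an appeal to Stone–Weierstrass and the Halmos–von Neumann theorem. Since this is quoted from \cite{YHS} as Proposition \ref{miniec}, in the paper itself one would simply cite it; the sketch above indicates how a self-contained argument would run.
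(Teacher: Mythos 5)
The paper does not prove this proposition; it is quoted verbatim from \cite[Theorem~3.1.7]{YHS} with the remark ``See also \cite[Chapter 3]{Aus} and \cite[Theorem 4]{HN},'' so there is no in-paper argument to compare against. Your sketch is the standard route through the classical structure theory and is, on the whole, sound: replacing $d$ by $\rho(x,y)=\sup_n d(f^n x,f^n y)$ to pass from equicontinuity to an isometry, taking the closure of $\{f^n\}$ in the compact isometry group to realize $X$ as a homogeneous space of a compact abelian group, and invoking Stone--Weierstrass and Halmos--von Neumann for the discrete-spectrum equivalence.

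Two places in your outline are looser than they should be. First, in $(1)\Rightarrow(3)$ the step from ``$\rho(fx,fy)\le\rho(x,y)$'' to equality is cleanest not by a contradiction about subsequences but by a direct inequality: uniform almost periodicity (which follows from equicontinuity plus surjectivity, and surjectivity follows from transitivity) gives for each $\ep>0$ some $m\ge1$ with $D^0(f^m,\id)\le\ep$, whence $d(f^n x,f^n y)\le 2\ep+d(f^{n+m}x,f^{n+m}y)\le 2\ep+\rho(fx,fy)$ for every $n\ge0$, so $\rho(x,y)\le\rho(fx,fy)$. Second, your justification of $(2)\Rightarrow(1)$ (``distances along orbits are controlled uniformly, so a dense orbit forces every orbit to be dense'') hides the real mechanism; a tighter argument is that equicontinuity plus surjectivity gives uniform almost periodicity (Proposition~\ref{ecuap} goes the other way, but the converse holds for surjective $f$), hence every point is almost periodic, hence every orbit closure is a minimal set, and transitivity then forces the unique orbit closure $\overline{O(x)}=X$ to be minimal. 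With those two points firmed up, your plan is a correct and essentially complete reconstruction of the cited theorem.
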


In \cite{Sun19}, we have obtained the following results.

\begin{proposition}[{\cite{Sun19}}]
Let $(X,f)$ a topological dynamical system.
\begin{enumerate}
\item If $(X,f)$ has the gluing orbit property and $h(f)=0$, then $(X,f)$ is minimal.
\item If $(X,f)$ is minimal and equicontinuous, then $(X,f)$ has the gluing orbit property.
\end{enumerate}
\end{proposition}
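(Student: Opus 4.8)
Statement (2) is the easier of the two, so I would dispose of it first by passing to the group model. By Proposition~\ref{miniec} ((1)$\Leftrightarrow$(4)), a minimal equicontinuous $(X,f)$ is topologically conjugate to a minimal rotation $x\mapsto x+\alpha$ on a compact abelian group $G$; since the gluing orbit property is a topological conjugacy invariant (a conjugacy between compact systems and its inverse are uniformly continuous, so $\ep$-tracing transfers with an adjusted $\ep$), it suffices to verify it for this rotation, equipped with a translation-invariant compatible metric $d$ on $G$. The decisive simplification is that translation invariance makes $d(f^{s_j+l}(z),f^l(x_j))=d(z+(s_j+l)\alpha,\,x_j+l\alpha)=d(z+s_j\alpha,\,x_j)$, so the whole tracing requirement \eqref{eqshadow} for the $j$-th block collapses to the single inequality $d(z+s_j\alpha,x_j)\le\ep$, independent of $m_j$ and $l$. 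Since $\{i\alpha:i\ge0\}$ is dense in the compact group $G$, the nested open sets $\bigcup_{i=0}^{M-1}B(i\alpha,\ep)$ exhaust $G$, so there is $M=M(\ep)$ for which $\{i\alpha:0\le i<M\}$ is $\ep$-dense, and by translation invariance so is $\{i\alpha:a\le i<a+M\}$ for every $a\ge0$. Then, given an orbit sequence $\{(x_j,m_j)\}$, one sets $z:=x_1$ (so block $1$ is traced exactly) and inductively, having fixed $t_1,\dots,t_{j-1}$ and hence $s_j$ with $d(z+s_j\alpha,x_j)\le\ep$, one picks $t_j\in\{1,\dots,M\}$ so that $s_{j+1}=s_j+m_j+t_j-1$ satisfies $d(z+s_{j+1}\alpha,x_{j+1})\le\ep$, which is possible because $\{(s_j+m_j+i)\alpha:0\le i<M\}$ is $\ep$-dense. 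This produces a gap $\sg\in\Sigma_M$ traced by $z$, i.e.\ the gluing orbit property with this $M(\ep)$. No real obstacle arises here once the group model is invoked.

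For statement (1) I would prove the contrapositive: if $(X,f)$ has the gluing orbit property but is not minimal, then $h(f)>0$. By Proposition~\ref{goprop} the system is transitive; by Zorn's lemma there is a minimal subset $Y\subsetneq X$, which is a proper closed invariant set, and one fixes $y\in Y$ and $w\in X\setminus Y$ with $d(w,Y)=4\ep>0$. Let $M=M(\ep)$ and fix a block length $m$ with $m>3(M-1)$. For each binary word $a=(a_1,\dots,a_N)$ containing no run of three equal consecutive symbols (there are still exponentially many such words, say at least $c^N$ with $c>1$), form the finite orbit sequence (using the equivalent finite formulation of the gluing orbit property) obtained by concatenating $N$ blocks, the $j$-th being $(w,m)$ if $a_j=1$ and $(y,m)$ if $a_j=0$, and let $p_a$ be a point $\ep$-tracing it with gaps $\le M$. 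The total traced length is $n\le N(m+M)$. A $0$-block keeps $p_a$ within $\ep$ of $Y$ for $m$ consecutive steps; a $1$-block places $p_a$ at distance $\ge 3\ep$ from $Y$ at its first step. The plan is to show that from the orbit of $p_a$ observed up to time $n$ at resolution $\ep$ — and hence from the orbit of any $p_b$ with $d(f^ip_a,f^ip_b)\le\ep$ for $0\le i<n$ — the word $a$ is determined up to an ambiguity bounded independently of $N$: the maximal stretches on which the point stays near $Y$, and the complementary stretches, are detected, and because $m$ dominates $M$ the short variable gaps can neither erase a near-$Y$ stretch nor blur the number of consecutive like-blocks composing any stretch. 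Consequently the $p_a$ occupy at least $\sim c^N$ distinct cells at $(n,\ep)$-resolution, so $s(X,n,\ep)\ge c^N/(\text{const})$ and $h(f,\ep)\ge \tfrac{\ln c}{m+M}>0$, whence $h(f)>0$.

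The genuinely delicate point — and, as the introduction stresses, the source of essentially all the difficulty — is the recovery argument in (1) with its variable gaps. One must handle the ``grey zone'' of distances to $Y$ between $\ep$ and $3\ep$ that opens up when $p_a$ is replaced by a nearby $p_b$ (so one works with the thresholds $\le 2\ep$ and $>2\ep$ from $Y$ for $p_b$), verify that, once $m>3(M-1)$, a run of at most two consecutive $0$-blocks (resp.\ $1$-blocks) together with the surrounding gaps produces a stretch whose length falls in a range disjoint from the ranges for other run lengths, so the run lengths are read off unambiguously, and check that the resulting bound on how many admissible words can share an $(n,\ep)$-cell is subexponential in $N$. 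Statement (2), by contrast, is immediate once one notices the collapse of the tracing condition in the rotation model.
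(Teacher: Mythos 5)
Your argument for part (2) is correct and clean: passing to the group model via Proposition~\ref{miniec} and using a translation-invariant metric collapses the whole tracing requirement for each block to the single inequality $d(z+s_j\alpha,x_j)\le\ep$, and the uniform $\ep$-density of any $M$ consecutive iterates of $\alpha$ makes the inductive choice of gaps immediate.

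For part (1), the contrapositive plan (a proper minimal $Y$, blocks starting at $y\in Y$ versus at $w$ with $d(w,Y)>0$, count separated tracers) is a reasonable starting point, but the recovery step that you yourself flag as the delicate one has a concrete hole that the device $m>3(M-1)$ does not close. The tracing for a $1$-block only forces the tracer away from $Y$ at the single instant $s_j$; for $l\ge 1$ the iterate $f^l(w)$ is unconstrained, and in a transitive non-minimal system it will in general return arbitrarily close to $Y$, as will the gap iterates. Consequently the observed ``within $2\ep$ of $Y$'' set for $p_b$ can contain a maximal interval of length up to $m+M-2$ which contains no $0$-block at all (for instance all of $[s_j+1,s_{j+1}-1]$ between two consecutive $1$-block starts), while a genuine single $0$-block produces a near-stretch of length at least $m$. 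These length ranges overlap no matter how large $m$ is taken, so the near/far pattern does not determine the run structure of $a$ up to bounded ambiguity, and the required subexponential bound on the cell size is not established. Closing this would need either a $w$ whose first $m$ iterates all stay a definite distance from $Y$ (which a non-minimal transitive system need not supply once $m$ is tied to $M(\ep)$), or a different separation mechanism. The latter is what the paper itself uses: in Proposition~\ref{unirig} and Lemma~\ref{lemsep} both block types come from the same transitive point $p$ but with two different lengths $T+M$ and $T$, so a mismatch in a bit or in a gap forces the two tracers to shadow $p$ and $f^r(p)$ simultaneously for some $1\le r\le 2M-1$, and non-rigidity (Lemma~\ref{nonrigid}) separates them at the definite time $\tau_r$, with no recovery argument at all; together with Propositions~\ref{urgouap}, \ref{ecuap} and \ref{miniec} this reproves statement (1) inside the present paper.
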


\subsection{Uniform Rigidity}\label{unirigid}

The notion of uniform rigidity plays a key role in 
this article.

\begin{definition}
        We say that $(X,f)$ is 
        \emph{uniformly rigid} if 
        there is a sequence $\{n_k\}_{k=1}^\infty$
        such that 
        $$\lim_{k\to\infty}D^0(f^{n_k},\id)= 0,$$ 
        i.e.
        $f^{n_k}\to \id$ uniformly.

\end{definition}

Uniform rigidity is closely related to recurrence and almost periodicity.

\begin{proposition}\label{propurs}
Let $(X,f)$ be a uniformly rigid system. Then
\begin{enumerate}
\item \label{repinf}
For every $\ep>0$,
the set $R(\ep)$ (as in Definition \ref{rectimes})
is infinite \cite[Lemma 9.2.5]{YHS}.
\item $f$ is a homeomorphism \cite[Lemma 3.2.11]{YHS}. 
\item \label{urze}
$h(f)=0$ \cite[Proposition 6.3]{GM}. 
\end{enumerate}
\end{proposition}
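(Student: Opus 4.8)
All three assertions are standard facts about uniformly rigid systems, recorded for later use; I would either cite the sources named in the statement or argue directly as follows. Passing to a subsequence we may assume $n_k\to\infty$ (if $\{n_k\}$ has no subsequence tending to infinity, then $f^m=\id$ for some $m$ and all three claims are trivial). For part (1), fix $\ep>0$; since $D^0(f^{n_k},\id)\to 0$, we have $D^0(f^{n_k},\id)\le\ep$ for all large $k$, so $R(\ep)\supseteq\{n_k:k\ge K\}$ for some $K$, which is infinite. For part (2), since $X$ is compact metric and $f$ is continuous, it suffices to show $f$ is a bijection, as a continuous bijection of a compact metric space is a homeomorphism. \emph{Surjectivity}: given $y\in X$, pass to a convergent subsequence $f^{n_{k_j}-1}(y)\to x$ (possible since $n_k\ge 1$ and $X$ is compact); then $f(x)=\lim_j f^{n_{k_j}}(y)=y$ by continuity of $f$ and because $d(f^{n_k}(y),y)\le D^0(f^{n_k},\id)\to 0$. \emph{Injectivity}: if $f(x)=f(y)$, applying $f^{n_k-1}$ gives $f^{n_k}(x)=f^{n_k}(y)$ for every $k$, and letting $k\to\infty$ yields $x=y$.

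The substantive point is part (3). I would deduce $h(f)=0$ from the variational principle $h(f)=\sup_\mu h_\mu(f)$ (supremum over $f$-invariant Borel probability measures) together with the classical fact that a measure-theoretically rigid system has zero metric entropy. First I would check that every invariant $\mu$ is rigid along $\{n_k\}$, i.e. $\mu(A\triangle f^{-n_k}A)\to 0$ for every Borel $A$. This is immediate when $\mu(\partial A)=0$: if $x\in\mathrm{int}\,A$ then $f^{n_k}(x)\in A$ for all large $k$ because $d(f^{n_k}(x),x)\le D^0(f^{n_k},\id)\to 0$, and similarly $f^{n_k}(x)\notin\overline{A}$ eventually when $x$ lies in the exterior of $A$; hence $\mathbf{1}_{f^{-n_k}A}\to\mathbf{1}_A$ pointwise off $\partial A$, and dominated convergence gives $\mu(A\triangle f^{-n_k}A)\to 0$. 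Since the Borel sets with $\mu$-null boundary form an algebra generating the Borel $\sigma$-algebra (e.g. $\mu(\partial B(x,r))=0$ for all but countably many $r$), a routine approximation extends the conclusion to arbitrary Borel $A$.

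Now let $\xi$ be a finite partition of $X$ whose cells have $\mu$-null boundary. Rigidity gives $H_\mu(\xi\vee f^{-n_k}\xi)\to H_\mu(\xi)$, hence $H_\mu(\xi\mid f^{-n_k}\xi)\to 0$. On the other hand $h_\mu(f,\xi)=\lim_{N\to\infty}H_\mu\bigl(\xi\mid\bigvee_{i=1}^{N-1}f^{-i}\xi\bigr)\le H_\mu(\xi\mid f^{-n}\xi)$ for every $n\ge 1$ (conditioning on a finer partition decreases conditional entropy); taking $n=n_k$ forces $h_\mu(f,\xi)=0$. Partitions with null boundary suffice to compute $h_\mu(f)$, so $h_\mu(f)=0$ for every invariant $\mu$, and the variational principle yields $h(f)=0$. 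The only genuine obstacle here is this last item, the passage from topological uniform rigidity to vanishing metric entropy; parts (1) and (2) are immediate from the definition, and in the final write-up one could simply quote \cite[Proposition 6.3]{GM} for (3).
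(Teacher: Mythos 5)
The paper offers no proof of this proposition---it simply cites external references for each item---so your task was to supply a self-contained argument, and what you wrote is correct. Parts (1) and (2) are straightforward and your handling is fine, including the useful preliminary reduction to the case $n_k\to\infty$ (otherwise $f^m=\id$ for some $m$ and all three claims are immediate), and the bijection argument (surjectivity via a limit point of $f^{n_k-1}(y)$, injectivity via $f^{n_k}\to\id$ uniformly, then a continuous bijection of a compact metric space is a homeomorphism).

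For part (3) you take the natural measure-theoretic route: show every $f$-invariant $\mu$ is rigid along $\{n_k\}$, deduce $H_\mu(\xi\mid f^{-n_k}\xi)\to 0$ for partitions $\xi$ with $\mu$-null cell boundaries, bound $h_\mu(f,\xi)\le H_\mu(\xi\mid f^{-n_k}\xi)$ using monotonicity of conditional entropy under refinement of the conditioning partition, and conclude $h_\mu(f)=0$ via the sufficiency of null-boundary partitions and the variational principle. Each of these steps is standard and correctly applied. The one place where you are slightly terse is the claim $H_\mu(\xi\vee f^{-n_k}\xi)\to H_\mu(\xi)$: it is worth spelling out that rigidity gives $\mu(A\cap f^{-n_k}B)\to\mu(A\cap B)=\delta_{AB}\,\mu(A)$ for cells $A,B$ of $\xi$ and then one invokes continuity of the Shannon entropy of a finite distribution, but this is routine. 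Also note that, as you observe, you do not actually need the approximation step extending rigidity to arbitrary Borel sets---the null-boundary partitions alone suffice for the entropy computation---so that paragraph could be omitted. Since the cited source \cite[Proposition 6.3]{GM} (Glasner--Maon) is for the topological entropy of uniformly rigid systems and uses essentially this circle of ideas, your write-up is a faithful reconstruction rather than a genuinely different approach.
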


\section{Zero Entropy Implies Equicontinuity}
\subsection{A sufficient condition for uniform rigidity}

Our key to take advantage of uniform rigidity for systems with the gluing orbit property is to consider whether a dense orbit stays close to a shift of itself.

\begin{lemma}\label{lemmafollow}
Suppose that there are $\gamma>0$, $p\in\tran$
 and $m\in\ZZ^+$ such that
 $$d\left(f^{n}(p), f^{n}(f^m(p))\right)\le\gamma\text{ for every }n\in\NN.$$
Then we have $D^0(\id, f^m)\le\gamma$, i.e.
 $$d(x, f^m(x))\le\gamma\text{ for every }x\in X.$$
\end{lemma}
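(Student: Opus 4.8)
The plan is to exploit density of the orbit of $p$ together with continuity of $f^m$ and the identity. The hypothesis says that the two orbits $\{f^n(p)\}_{n\ge 0}$ and $\{f^n(f^m(p))\}_{n\ge 0} = \{f^{n+m}(p)\}_{n\ge 0}$ stay within $\gamma$ of each other pointwise. In other words, for every point $q$ in the forward orbit $O(p)$, we have $d(q, f^m(q)) \le \gamma$. So the inequality $d(x, f^m(x)) \le \gamma$ already holds on the dense set $O(p)$; the only remaining task is to pass from this dense set to all of $X$.

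First I would fix an arbitrary $x \in X$. Since $p \in \tran$, the orbit $O(p)$ is dense in $X$, so there is a sequence $n_j \in \NN$ with $f^{n_j}(p) \to x$ as $j \to \infty$. By the hypothesis applied at $n = n_j$, we have
$$d\left(f^{n_j}(p), f^m(f^{n_j}(p))\right) \le \gamma \quad \text{for every } j.$$
Now I would let $j \to \infty$. On the left slot, $f^{n_j}(p) \to x$. On the right slot, continuity of $f^m$ gives $f^m(f^{n_j}(p)) \to f^m(x)$. Since the metric $d$ is continuous on $X \times X$, the left-hand side converges to $d(x, f^m(x))$, and a non-strict inequality is preserved under limits, so $d(x, f^m(x)) \le \gamma$. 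As $x$ was arbitrary, this gives $D^0(\id, f^m) = \sup_{x \in X} d(x, f^m(x)) \le \gamma$, which is exactly the claim.

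There is no serious obstacle here; the argument is a routine density-plus-continuity limiting argument, and the only mild subtlety worth flagging is that it is essential that the hypothesis uses the non-strict inequality $\le \gamma$ (rather than $< \gamma$), since strict inequalities need not survive the passage to the limit — but the conclusion is likewise stated with $\le \gamma$, so everything is consistent. It is also worth noting for later use that this lemma converts a single ``self-tracing'' condition along one dense orbit into a uniform closeness of $f^m$ to the identity, which is precisely the kind of statement needed to build the sequence $\{n_k\}$ witnessing uniform rigidity; combined with the gluing orbit property (which should supply, for each $\ep > 0$, some $m$ for which the hypothesis holds with $\gamma = \ep$), this yields $f^{n_k} \to \id$ uniformly.
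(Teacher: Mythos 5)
Your argument is correct and is essentially identical to the paper's proof: pick a sequence $f^{n_j}(p)\to x$ using density of the orbit of $p$, apply the hypothesis at $n=n_j$, and pass to the limit using continuity of $f^m$ and of the metric, noting that non-strict inequalities survive limits. Your added remark about why the non-strict inequality is needed is accurate but not something the paper dwells on.
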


\begin{proof} Take any $x\in X$.
As $p\in\tran$, there is a sequence $\{n_k\}_{k=1}^\infty$
in $\NN$ such that
$$\lim_{k\to\infty}f^{n_k}(p)=x.$$
Then for every $n\in\NN$, we have
\begin{align*}
d(x, f^m(x))
=\lim_{k\to\infty}d(f^{n_k}(p), f^m(f^{n_k}(p))\le\gamma.
\end{align*}
\end{proof}

\begin{lemma}\label{nonrigid}
Suppose that $(X,f)$ is not uniformly rigid. Then
 there is $\gamma>0$
such that for every $p\in\tran$ and
every  $m\in\ZZ^+$, there is $\tau=\tau(p,m)\in\NN$ such that
 $$d(f^{\tau}(p), f^{\tau}(f^m(p))>\gamma.$$
\end{lemma}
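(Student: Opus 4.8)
The plan is to prove the contrapositive directly from the definition of uniform rigidity, using Lemma \ref{lemmafollow} as the bridge. Suppose $(X,f)$ is not uniformly rigid. Then it is \emph{not} the case that there is a sequence $\{n_k\}$ with $D^0(f^{n_k},\id)\to 0$. Negating this, there must exist some $\gamma_0>0$ such that $D^0(f^n,\id)>\gamma_0$ for all but finitely many $n\in\ZZ^+$; equivalently, the set $\{n\in\ZZ^+: D^0(f^n,\id)\le\gamma_0\}$ is finite. (If this set were infinite for every $\gamma>0$, one could extract a uniformly rigid sequence by a standard diagonal argument over $\gamma=1/k$.) I will set $\gamma:=\gamma_0$, or rather a slightly smaller constant as forced by the argument below.

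Next I would fix $\gamma:=\gamma_0$ and let $N$ be large enough that $D^0(f^n,\id)>\gamma$ for every $n\ge N$ — in fact I may need to be a bit more careful, because the finitely many small indices $n<N$ could include some relevant $m$. The cleanest route: take $\gamma>0$ such that $D^0(f^m,\id)>\gamma$ for \emph{every} $m\in\ZZ^+$. This is possible precisely when no power of $f$ is uniformly close to the identity, which is slightly stronger than non-rigidity. To handle the general case, I instead argue by contradiction within the lemma: suppose the conclusion fails, i.e. for every $\gamma>0$ there exist $p_\gamma\in\tran$ and $m_\gamma\in\ZZ^+$ with $d(f^\tau(p_\gamma),f^\tau(f^{m_\gamma}(p_\gamma)))\le\gamma$ for all $\tau\in\NN$. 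Apply Lemma \ref{lemmafollow} with this $p_\gamma$, $m_\gamma$, $\gamma$ to conclude $D^0(\id,f^{m_\gamma})\le\gamma$. Running $\gamma=1/k$ produces a sequence $m_k\in\ZZ^+$ with $D^0(\id,f^{m_k})\le 1/k\to 0$. Now I must rule out the degenerate possibility that the $m_k$ do not tend to infinity: if they are bounded, some value $m^*$ repeats infinitely often, forcing $D^0(\id,f^{m^*})=0$, i.e. $f^{m^*}=\id$; then $f^{2m^*}=\id$, $f^{3m^*}=\id$, etc., and the sequence $n_k:=km^*$ witnesses uniform rigidity. If the $m_k$ are unbounded, pass to a strictly increasing subsequence, which directly witnesses uniform rigidity. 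Either way $(X,f)$ is uniformly rigid, contradicting the hypothesis.

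So the main steps, in order, are: (i) assume the conclusion of Lemma \ref{nonrigid} fails; (ii) for each $k$ extract $p_k\in\tran$, $m_k\in\ZZ^+$ satisfying the tracing bound with constant $1/k$; (iii) invoke Lemma \ref{lemmafollow} to upgrade to $D^0(\id,f^{m_k})\le 1/k$; (iv) split into the bounded and unbounded cases for $\{m_k\}$ and in each case produce a uniform rigidity sequence; (v) conclude a contradiction. The one subtlety I anticipate — and the step most prone to a gap if handled carelessly — is exactly step (iv): one cannot simply take $n_k=m_k$ without knowing $m_k\to\infty$ (a uniform rigidity sequence should be, implicitly, increasing/non-eventually-constant in the sense that distinct large indices appear, though even a constant sequence with $f^{m^*}=\id$ is fine since then $f^{km^*}=\id$). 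Handling the case $f^{m^*}=\id$ by noting that $(X,f)$ then trivially satisfies $f^{km^*}\to\id$ resolves it. Everything else is a routine unwinding of definitions, with Lemma \ref{lemmafollow} doing the real work of transferring closeness along one dense orbit to closeness everywhere on $X$.
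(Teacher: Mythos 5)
Your proof is correct and follows essentially the paper's own argument: assume the conclusion fails, for each $k$ extract $p_k\in\tran$ and $m_k\in\ZZ^+$ with $d\left(f^n(p_k), f^n(f^{m_k}(p_k))\right)\le 1/k$ for all $n\in\NN$, apply Lemma \ref{lemmafollow} to upgrade to $D^0(\id,f^{m_k})\le 1/k$, and deduce uniform rigidity, a contradiction. The bounded/unbounded case split you add in step (iv) is harmless but not needed under the paper's stated definition of uniform rigidity, which only asks for \emph{some} sequence $\{n_k\}$ with $D^0(f^{n_k},\id)\to 0$ (no requirement that $n_k\to\infty$), so the paper simply takes $n_k=m_k$ and stops there.
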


\begin{proof}
Assume the result fails. Then for each $k\in\ZZ$, 
there are $p_k\in\tran$ and $m_k\in\ZZ^+$ such
that
$$d(f^{n}(p_k), f^{n}(f^{m_k}(p_k))\le2^{-k}\text{ for every }n\in\NN.$$
Then by Lemma \ref{lemmafollow}, for every $x\in X$, we have
$$d(x, f^{m_k}(x))\le2^{-k}.
$$
This implies that $f^{m_k}\to \id$ uniformly as $k\to\infty$ and hence $(X,f)$ is uniformly
rigid.
\end{proof}

\subsection{Zero entropy implies uniform rigidity}\label{zerour}

Based on Lemma \ref{nonrigid}, we can show that a system with the gluing orbit property and zero topological
entropy must be uniformly rigid.

\begin{proposition}\label{unirig}
Suppose that $(X,f)$ has the gluing orbit property and it is not uniformly rigid.
Then $h(f)>0$.
\end{proposition}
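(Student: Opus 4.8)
The plan is to use Lemma \ref{nonrigid} to produce, for any system with the gluing orbit property that is not uniformly rigid, an exponentially growing family of separated orbit segments, thereby forcing positive topological entropy. Fix the $\gamma>0$ given by Lemma \ref{nonrigid}, and let $\ep = \gamma/4$ (say), with $M = M(\ep)$ the gluing constant from Definition \ref{defgo}. By Proposition \ref{goprop} the system is transitive, so $\tran$ is dense and nonempty; fix a point $p \in \tran$. The idea is that, because $p$ has a dense orbit, long pieces of its orbit can be used as the "building blocks" $x_j$ in an orbit sequence, and the gluing orbit property lets us concatenate arbitrary finite sequences of such blocks with bounded gaps. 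The key leverage from Lemma \ref{nonrigid} is that no shift $f^m$ follows the orbit of $p$ to within $\gamma$: for every $m$ there is a time $\tau(p,m)$ where $f^\tau(p)$ and $f^{\tau+m}(p)$ are more than $\gamma$ apart. This is precisely what allows us to distinguish two tracing points that differ by a shift.

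The main construction: pick a long block length $\ell$ and the single building block $x = p$, $m_1 = \ell$. For each finite binary word $w = (w_1, \dots, w_N) \in \{0,1\}^N$, form an orbit sequence that uses $N$ copies of the segment of length $\ell$ starting at $p$, but inserts a gap of one of two distinct lengths depending on whether $w_i = 0$ or $w_i = 1$. More precisely, choose two gap values $t, t' \in \{1,\dots,M\}$ with $t \ne t'$ (available once $M \ge 2$; if $M=1$ the system has a much stronger specification-type property and the argument is easier, or one handles it separately). By the gluing orbit property — really by its finite version noted in the Remark after Definition \ref{defgo}, applied with a uniform choice of gaps still bounded by $M$ — for each word $w$ there is a point $z_w$ that $\ep$-traces the corresponding orbit sequence. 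Different words $w \ne w'$ first differ at some coordinate $i$; up to that point the two orbit sequences are identical, so $z_w$ and $z_{w'}$ shadow the same orbit of $p$ up to time roughly $s_i$, but then the accumulated gap-sums $s_{i+1}$ differ by $|t - t'|$. Using Lemma \ref{nonrigid} with $m = |t-t'|$ (or a bounded multiple thereof), one finds a coordinate where the orbits of $z_w$ and $z_{w'}$ are pulled apart by more than $\gamma - 2\ep \ge \gamma/2 > \ep$, so $z_w$ and $z_{w'}$ are $(n,\ep)$-separated for $n$ on the order of $N(\ell + M)$. Hence $s(X, n, \ep) \ge 2^N$ with $n \asymp N(\ell+M)$, giving $h(f,\ep) \ge \tfrac{\ln 2}{\ell + M} > 0$, so $h(f) > 0$.

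The hard part will be the bookkeeping that makes the separation genuinely robust: I must ensure that the discrepancy forced by Lemma \ref{nonrigid} at coordinate $i$ is not washed out by the $\ep$-tracing slack at both ends, and that the relevant time $\tau(p, m)$ at which the shift fails to be followed actually falls inside the window $[0, \ell-1]$ of a later block — this may require first choosing $\ell$ large enough (depending on $\gamma$ and on the finitely many possible values of $m = k|t-t'|$, $k$ bounded) so that every relevant $\tau$ is captured, and possibly padding each block so the "bad time" is visited. One clean way around this: instead of a single fixed block, let the block attached to coordinate $i$ be the orbit segment of $p$ of length $\ell$ chosen so that it already contains a time where $f^{\tau}(p)$ and $f^{\tau + \Delta}(p)$ are $\gamma$-apart for the relevant gap-difference $\Delta$; density of $O(p)$ and Lemma \ref{nonrigid} guarantee such segments exist with uniformly bounded $\ell$. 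Once $\ell$ and the gap values $t, t'$ are fixed with all these properties, the rest is the routine estimate above. I would also remark that combined with $(3)\Rightarrow(1)$ and the converse already in \cite{Sun19}, Proposition \ref{unirig} gives the implication $(1)\Rightarrow(4)$ of Theorem \ref{equiconds}, which is how it is used.
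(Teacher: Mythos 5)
Your plan hinges on encoding a binary word into the \emph{gap} sequence: you write ``inserts a gap of one of two distinct lengths $t, t' \in \{1,\dots,M\}$ depending on whether $w_i = 0$ or $w_i = 1$.'' But in Definition \ref{defgo} the gaps are not at the user's disposal. You choose the orbit sequence $\sC$ (the pairs $(x_j,m_j)$), and the property then only asserts the \emph{existence} of \emph{some} gap $\sg \in \Sigma_M$ for which $(\sC,\sg)$ can be $\ep$-traced. If all your blocks are identical copies of $(p,\ell)$, then $\sC$ does not depend on $w$, and you get one (or some) admissible gap for that $\sC$ --- there is no reason the prescribed gap pattern $\sg_w$ you want is among them. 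This is precisely the ``essential difficulty caused by the variable gaps'' that the introduction singles out as what distinguishes the gluing orbit property from stronger specification-like properties, and your construction assumes it away.

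The paper gets around this by putting the binary alphabet into the \emph{block lengths} rather than the gaps: it sets $m_1 = T+M$ and $m_2 = T$ (differing by $M$) and uses the orbit sequence $\sC_\xi = \{(p,m_{\xi(k)}+1)\}$, which genuinely depends on $\xi$. The price is that the gaps $\sg_\xi$ produced by the property are uncontrolled, so Lemma \ref{lemsep} has to do a two-case analysis at the first coordinate $n$ where $\xi$ and $\xi'$ disagree: either the gaps also agreed up to that point (Case~1), in which case the block-length difference forces a temporal offset $r = M + t_n(\xi) - t_n(\xi') \in \{1,\dots,2M-1\}$ between the two shadowed copies of $p$'s orbit; or the gaps already disagreed at some earlier $K < n$ (Case~2), which itself forces an offset $r = |t_K(\xi) - t_K(\xi')| \in \{1,\dots,M-1\}$. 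Either way Lemma \ref{nonrigid} supplies a time $\tau_r$ within the next block where the offset is detected, yielding $((N+1)(T+2M),\ep)$-separation. Your ``hard part'' paragraph is indeed where the real work lives, and the fix you sketch (padding blocks so the bad time falls inside) is in the spirit of the paper's choice $T := 2M + \max\{\tau_k : k \le 2M-1\}$; but the more fundamental repair you need is to stop trying to prescribe gaps and instead vary the $m_j$'s, then argue against \emph{whatever} gaps the property returns.
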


The rest of this subsection is devoted to the proof of Proposition \ref{unirig}.

Suppose that $(X,f)$ has the gluing orbit property and it is not uniformly rigid.
Let $M:=M(\ep)$ be the constant in the gluing orbit property (as in Definition \ref{defgo}).
By Proposition \ref{goprop}, $(X,f)$ is topologically transitive and hence $\tran\ne\emptyset$.
By Lemma \ref{nonrigid},
there are $p\in\tran$, $\gamma>0$ 
and $0<\ep<\frac13\gamma$
such that
 for each $k=1,2,\cdots, 2M-1$, there is $\tau_k\in\NN$ such that
\begin{equation}\label{tauksep}
d(f^{\tau_k}(p), f^{\tau_k}(f^k(p))>\gamma.
\end{equation}
We fix
\begin{equation}\label{pickt}
T:=2M+\max\{\tau_k: k=1,\cdots, 2M-1\}.
\end{equation}
Note that $T$ depends exclusively on $p$ and $\ep$.
Denote $$m_1:=T+M\text{ and }m_2:=T.$$
For each $\xi=\{\xi(k)\}_{k=1}^\infty\in\Sigma_2:=\{1,2\}^{\ZZ^+}$, denote
$$\sC_\xi:=\{(p,m_{\xi(k)}+1)\}_{k=1}^\infty.$$
The gluing orbit property ensures that there are $z_\xi\in X$ and 
$$\sg_\xi=\{t_k(\xi)\}_{k=1}^\infty\in\Sigma_M$$
 such that
$(\sC_\xi,\sg_\xi)$ is $\ep$-traced by $z_\xi$.

\begin{lemma}\label{lemsep}
Let $N\ge1$.
If there is $n\in\{1,\cdots, N\}$ such that
$\xi(n)\ne \xi'(n)$, then $z_\xi$ and $z_{\xi'}$
are $((N+1)(T+2M),\ep)$-separated.
\end{lemma}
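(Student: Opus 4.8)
The plan is to exploit the separation witnesses $\tau_k$ from \eqref{tauksep}: if two sign sequences $\xi,\xi'$ first differ at coordinate $n\le N$, then by coordinate $n$ the two tracing points $z_\xi$ and $z_{\xi'}$ have accumulated different total lengths $s_n(\xi)\ne s_n(\xi')$, and the discrepancy $k:=|s_n(\xi)-s_n(\xi')|$ lies in a controlled range (it is a nonzero multiple-free combination of the block lengths $m_1,m_2$ which differ exactly by $M$, so in fact $1\le k\le 2M-1$ up to the bookkeeping from the gaps). At that spot one block, say the one traced by $z_\xi$, is following $O(p)$ from time $0$, while the other is following $O(p)$ from time roughly $k$; since $d(f^{\tau_k}(p),f^{\tau_k}(f^k(p)))>\gamma=3\ep$, the triangle inequality forces $z_\xi$ and $z_{\xi'}$ to be $\ep$-separated at a time no larger than the end of that block. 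First I would make precise, using the definition $s_j=\sum_{i<j}(m_i+t_i-1)$ and the constraints $m_{\xi(i)}\in\{T,T+M\}$, $t_i\in\{1,\dots,M\}$, that after $n$ blocks one has $s_n(\xi),s_n(\xi')\le n(T+2M)$ (roughly), and more importantly that their difference, reduced to the range where the $\tau_k$ apply, equals some $k\in\{1,\dots,2M-1\}$ — this is exactly why $T$ in \eqref{pickt} was chosen to dominate all the $\tau_k$ and why $2M-1$ signs' worth of separation data was collected.

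The key steps, in order: (1) locate the first index $n\le N$ with $\xi(n)\ne\xi'(n)$ and compare the shift amounts $s_n(\xi)$ and $s_n(\xi')$; identify the relevant offset $k$ and check $1\le k\le 2M-1$ so that \eqref{tauksep} is available; (2) at the $n$-th block, the point $f^{s_n(\xi)+\ell}(z_\xi)$ is $\ep$-close to $f^{\ell}(p)$ for $0\le\ell<m_{\xi(n)}$, while $f^{s_n(\xi')+\ell}(z_{\xi'})$ is $\ep$-close to $f^{\ell}(p)$ over its own block; align these so that the time $\ell=\tau_k$ (or $\ell$ corresponding to $f^{\tau_k}$ along the later-starting block) lies inside the shorter of the two blocks — this is guaranteed because $\tau_k< T\le m_{\xi(n)}$; (3) apply the triangle inequality: at that common time $t\le s_n(\xi)+\tau_k$ we get $d(f^t(z_\xi),f^t(z_{\xi'}))\ge d(f^{\tau_k}(p),f^{\tau_k}(f^k(p)))-2\ep>\gamma-2\ep\ge\ep$; (4) bound $t\le (N+1)(T+2M)$ using the estimate from step (1) plus one extra block's worth, $m_{\xi(n)}+\max_k\tau_k< 2(T+M)+T< (T+2M)$ per block with room to spare, giving the stated $((N+1)(T+2M),\ep)$-separation.

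The main obstacle I anticipate is the bookkeeping in steps (1)–(2): pinning down the exact offset $k$ between the two orbit segments at the first differing block, since the gaps $t_i$ for $i<n$ are the same for $\xi$ and $\xi'$ (the tracing point, gap, and hence $s_i$ agree up to $i=n$ only if the earlier blocks and gaps coincide — but the gaps $\sg_\xi,\sg_{\xi'}$ are chosen independently by the gluing orbit property, so they need not agree even when the $\xi$-coordinates do). This means one cannot simply say $s_n(\xi)=s_n(\xi')$ on agreement; rather one must argue that wherever the two traced orbits land on $O(p)$, the relative offset between "$z_\xi$'s copy of the $n$-th $p$-block" and "$z_{\xi'}$'s copy" is a nonzero integer whose absolute value is at most $2M-1$ — because the two blocks have equal length $m_{\xi(n)}\ne m_{\xi'(n)}$ only when $\xi(n)\ne\xi'(n)$, and the accumulated difference of the gaps over $n-1$ steps is at most $(n-1)(M-1)$ which must be handled, perhaps by comparing against an intermediate reference or by using that the blocks themselves have length $\ge T> 2M+\max\tau_k$ so the "current block" of one sequence overlaps a long stretch of the corresponding block of the other. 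I would resolve this by choosing to compare $z_\xi$ with $z_{\xi'}$ along the $p$-orbit directly: both shadow $f^\bullet(p)$ on block $n$, so on the overlap of the two time-windows they are both $\ep$-close to the \emph{same} orbit of $p$ started at the \emph{same} point, and the mismatch in block lengths (exactly $M$) together with the choice of $T$ forces a genuine $\ell\le\min$ of the two block lengths at which one window already sees $f^0(p)$-vicinity while the other sees $f^{\ge1}(p)$-vicinity — it is there that \eqref{tauksep} bites.
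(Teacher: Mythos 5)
Your plan identifies the right mechanism (use the non-following times $\tau_k$, and note $r+\tau_r < T$ so the separating time lands inside a traced block), and crucially you spotted the real obstacle: the gap sequences $\sg_\xi$ and $\sg_{\xi'}$ are supplied independently by the gluing orbit property and need not agree on indices $1,\dots,n-1$ even though the orbit sequences $\sC_\xi,\sC_{\xi'}$ do. But your proposed resolution does not work as stated. You want to argue that the offset between "$z_\xi$'s copy of the $n$-th block" and "$z_{\xi'}$'s copy" is a nonzero integer of absolute value at most $2M-1$, so that \eqref{tauksep} applies. That bound is false: if the gaps diverge earlier, the accumulated offset at the start of block $n$ can be as large as $(n-1)(M-1)$, and then block-length difference adds up to $M$ more, which is far outside the range $\{1,\dots,2M-1\}$ where the separation witnesses $\tau_k$ were collected. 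Appealing to "the current block of one sequence overlaps a long stretch of the corresponding block of the other" does not rescue this, because what is needed for \eqref{tauksep} is a small offset $r$, not a large overlap.

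The paper's proof resolves exactly this by a case split that your plan misses. If the gaps agree on indices $1,\dots,n-1$, the two traces reach block $n$ simultaneously, and because $m_1-m_2=M$ the offset at the start of block $n+1$ is $r=M+t_n(\xi)-t_n(\xi')\in\{1,\dots,2M-1\}$; one then applies $\tau_r$ inside block $n+1$. If instead the gaps first differ at some $K\le n-1$, one does not wait until block $n$ at all: the offset at the start of block $K+1$ is already $r=|t_K(\xi)-t_K(\xi')|\in\{1,\dots,M-1\}$, and one applies $\tau_r$ inside block $K+1$, keeping the separating time below $(K+1)(T+2M)\le(N+1)(T+2M)$. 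The essential trick is to localize to the \emph{first} index where the gaps disagree (or, if they never do before $n$, use the block-length mismatch at $n$), which is precisely what keeps the relevant offset in the window $\{1,\dots,2M-1\}$ for which the $\tau_k$ exist. Without this case split the offset is uncontrolled and the argument breaks down. One further small slip: the paper takes $0<\ep<\tfrac13\gamma$, i.e.\ $\gamma>3\ep$, not $\gamma=3\ep$; you need the strict inequality to conclude $\gamma-2\ep>\ep$.
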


\begin{proof}
We may assume that
$\xi(n)=1$, $\xi'(n)=2$ and
$$\xi(k)=\xi'(k)\text{ for each }k=1,2,\cdots, n-1.$$

The proof divides into two cases:

\begin{enumerate}[\bf {Case }1.]
\item Assume that
$$t_k(\xi)=t_k(\xi')\text{ for each }k=1,2,\cdots,n-1.$$
In this case
we have
$$s:=\sum_{k=1}^{n-1}(m_{\xi(k)}+t_k(\xi))=\sum_{k=1}^{n-1}(m_{\xi'(k)}+t_k(\xi'))\le(n-1)(T+2M).$$
That is, the orbits of $z_\xi$ and $z_{\xi'}$ start to trace the $n$-th
orbit segments of $\sC_\xi$ and $\sC_{\xi'}$, respectively, at the same time.
Then we should note that the $n$-th orbit segments of $\sC_\xi$ and $\sC_{\xi'}$ have different lengths.
Consider the tracing properties of $z_\xi$ and $z_{\xi'}$ for the $(n+1)$-th orbit segments,
whose lengths are no less than $T+1$.
We have
\begin{align}
d(f^{s+T+M+t_n(\xi)+l}(z_\xi),f^{l}(p))&\le\ep, 
\text{ for each }l=0,1,\cdots,T;\label{eqxi}\\
d(f^{s+T+t_n(\xi')+l}(z_{\xi'}),f^{l}(p))&\le\ep,
\text{ for each }l=0,1,\cdots,T.\label{eqxip}
\end{align}
Let $r:=M+t_n(\xi)-t_n(\xi')$. 
Then $1\le r\le 2M-1$. 
By \eqref{tauksep}, we have
$$d(f^{\tau_r}(p),
f^{\tau_r}(f^{r}(p)))>\gamma.$$
By \eqref{pickt}, we have
$$r+\tau_r<2M+\tau_r\le T.$$
So we can apply \eqref{eqxi} for $l=\tau_r$ and \eqref{eqxip} for $l=r+\tau_r$.
Then
\begin{align*}
&d(f^{s+T+M+t_n(\xi)+\tau_r}(z_\xi),f^{s+T+M+t_n(\xi)+\tau_r}(z_{\xi'}))
\\ \ge{}&d(f^{\tau_r}(p), f^{r+\tau_r}(p))
\\&
-d(f^{s+T+M+t_n(\xi)+\tau_r}(z_\xi), f^{\tau_r}(p))
\\&-
d(f^{s+T+M+t_n(\xi)+\tau_r}(z_{\xi'}), f^{r+\tau_r}(p))
\\>{}&\gamma-2\ep>\ep.
\end{align*}

Moreover, we have an estimate of the separation time
\begin{align*}
s+T+M+t_n(\xi)+\tau_r&\le(n-1)(T+2M)+T+M+M+(T-2M)
\\&<(N+1)(T+2M).
\end{align*}

\begin{figure}[h]
	\label{figcase1}
	\begin{tikzpicture}[fill opacity=0.8]

\node at (-1,0) {$(\sC_\xi,\sg_\xi)$:};	
\node at (0,0) {$\cdots$};
\fill (0.3,0) circle (2pt);
\node at (0.3,-0.3) {$p$};
\draw [ ->, thick,] (0.4,0) -- (5.2,0);
\fill (5.3,0) circle (2pt);
\node at (5.3,-0.3) {$f^{T+M}(p)$};
\draw [ ->, dashed,] (5.4,0) -- (6.5,0);

\node at (5.9,0.3) {$t_n(\xi)$};
\fill (6.6,0) circle (2pt);
\node at (6.6,-0.3) {$p$};
\draw [ ->, thick,] (6.7,0) -- (9,0);
\fill (8.2,0) circle (2pt);
\node at (8.2,-0.3) {$f^{\tau_r}(p)$};

\node at (-1,-1.5) {$(\sC_{\xi'},\sg_{\xi'})$:};	
\node at (0,-1.5) {$\cdots$};
\fill (0.3,-1.5) circle (2pt);
\node at (0.3,-1.8) {$p$};
\draw [ ->, thick,] (0.4,-1.5) -- (3.4,-1.5);
\fill (3.5,-1.5) circle (2pt);
\node at (3.5,-1.8) {$f^{T}(p)$};
\draw [ ->, dashed,] (3.6,-1.5) -- (4.7,-1.5);

\node at (4.1,-1.2) {$t_n(\xi')$};
\fill (4.8,-1.5) circle (2pt);
\node at (4.8,-1.8) {$p$};
\draw [ ->, thick,] (4.9,-1.5) -- (9,-1.5);
\fill (6.6,-1.5) circle (2pt);
\node at (6.6,-1.8) {$f^{r}(p)$};
\fill (8.2,-1.5) circle (2pt);
\node at (8.2,-1.8) {$f^{r+\tau_r}(p)$};

\draw [dashed, thick,] (8.2, -0.3) -- (8.2,-1.5) node[tubnode] {$>\gamma$};
\node at (9.4,-1.5) {$\cdots$};
\node at (9.4,0) {$\cdots$};
		
	\end{tikzpicture}
	\caption{Separation in Case 1}
\end{figure}

\item
Assume that
$$K:=\min\{k\in\ZZ^+: t_k(\xi)\ne t_k(\xi')\}\le n-1.$$
We may assume that $t_K(\xi)>t_K(\xi')$.
Let $r:=t_K(\xi)-t_K(\xi')$.
Then 
$$1\le r\le M-1.$$
By \eqref{tauksep} and \eqref{pickt}, 
we have
$$d(f^{\tau_r}(p),
f^{\tau_r}(f^{r}(p)))>\gamma
\text{ and $r+\tau_r<T$}.$$
Let
$$s:=\sum_{k=1}^{K}(m_{\xi(k)}+t_k(\xi))=\sum_{k=1}^{K}(m_{\xi'(k)}+t_k(\xi'))
+r\le K(T+2M).$$
The tracing properties yield that
\begin{align*}
d(f^{s+l}(z_\xi), f^{l}(p))&\le\ep,
\text{ for each }l=0,1,\cdots,T;\\
d(f^{s-r+l}(z_{\xi'}),f^{l}(p))&\le\ep,
\text{ for each }l=0,1,\cdots,T.
\end{align*}
Then
\begin{align*}
d(f^{s+\tau_r}(z_\xi),f^{s+\tau_r}(z_{\xi'}))
\ge{}&d(f^{\tau_r}(p),f^{r+\tau_r}(p))
\\&
-d(f^{s+\tau_r}(z_\xi),f^{\tau_r}(p))
\\&-
d(f^{s-r+(r+\tau_r)}(z_{\xi'}),f^{r+\tau_r}(p))
\\>{}&\gamma-2\ep>\ep.
\end{align*}
Again, we have
$$s+\tau_r< K(T+2M)+ T<(N+1)(T+2M).$$

\begin{figure}[h]
	\label{figcase2}
	\begin{tikzpicture}[fill opacity=0.8]
	
	\node at (-1,0) {$(\sC_\xi,\sg_\xi)$:};	
	\node at (0,0) {$\cdots$};
	\fill (0.3,0) circle (2pt);
	\node at (0.3,-0.3) {$p$};
	\draw [ ->, thick,] (0.4,0) -- (3.4,0);
	\fill (3.5,0) circle (2pt);
	\node at (3.5,-0.3) {$f^{m_{\xi(K)}}(p)$};
	\draw [ ->, dashed,] (3.5,0) -- (5.7,0);

	\node at (4.6,0.3) {$t_K(\xi)$};
	\fill (5.8,0) circle (2pt);
	\node at (5.8,-0.3) {$p$};
	\draw [ ->, thick,] (5.9,0) -- (9,0);
	\fill (8.2,0) circle (2pt);
	\node at (8.2,-0.3) {$f^{\tau_r}(p)$};
	
	\node at (-1,-1.5) {$(\sC_{\xi'},\sg_{\xi'})$:};	
	\node at (0,-1.5) {$\cdots$};
	\fill (0.3,-1.5) circle (2pt);
	\node at (0.3,-1.8) {$p$};
	\draw [ ->, thick,] (0.4,-1.5) -- (3.4,-1.5);
	\fill (3.5,-1.5) circle (2pt);
	\node at (3.5,-1.8) {$f^{m_{\xi'(K)}}(p)$};
	\draw [ ->, dashed,] (3.6,-1.5) -- (4.7,-1.5);

	\node at (4.1,-1.2) {$t_K(\xi')$};
	\fill (4.8,-1.5) circle (2pt);
	\node at (4.8,-1.8) {$p$};
	\draw [ ->, thick,] (4.9,-1.5) -- (9,-1.5);
	\fill (5.8,-1.5) circle (2pt);
	\node at (5.8,-1.8) {$f^{r}(p)$};
	\fill (8.2,-1.5) circle (2pt);
	\node at (8.2,-1.8) {$f^{r+\tau_r}(p)$};
	
	\draw [dashed, thick,] (8.2, -0.3) -- (8.2,-1.5) node[tubnode] {$>\gamma$};
	\node at (9.4,-1.5) {$\cdots$};
	\node at (9.4,0) {$\cdots$};

	\end{tikzpicture}
	\caption{Separation in Case 2}
\end{figure}
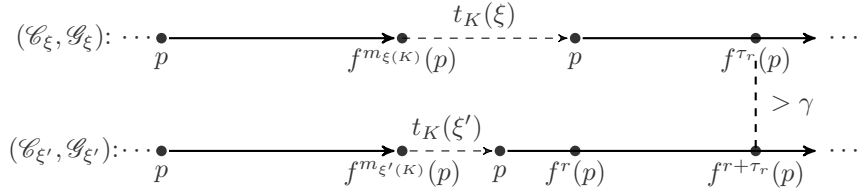

\end{enumerate}
\end{proof}

By Lemma \ref{lemsep}, for each $N$,
there is an $((N+1)(T+2M),\ep)$-separated set whose cardinality is $2^N$.
This yields that
$$h(f)\ge h(X,f,\ep)\ge\lim_{N\to\infty}\frac{\ln 2^N}{(N+1)(T+2M)}=\frac{\ln2}{T+2M}>0.$$

\subsection{Uniform rigidity implies equicontinuity}\label{unirigeq}

We remark that there are minimal and uniformly rigid systems that are weakly mixing
\cite{GM}. To complete the proof of Theorem \ref{zeroent}, we still need to
show that a uniformly rigid system
with the gluing orbit property
 is uniformly almost periodic. Then by
Proposition \ref{ecuap}, it is equicontinuous.

\begin{lemma}\label{lemsyn}
Let $(X,f)$ be a uniformly rigid  system
with the gluing orbit property. Then
for every $\ep>0$ and any $x,y\in X$, there is
$m<M:= M(\frac\ep3)$ such that
$$d(f^n(x),f^n(f^m(y)))\le\ep\text{ for every }n\in\NN.$$
\end{lemma}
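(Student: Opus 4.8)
The plan is to obtain the synchronising power $m$ as one less than a gap produced by the gluing orbit property, after first exploiting uniform rigidity to discard a long "warm-up" segment. Fix $\ep>0$ and $x,y\in X$, and set $M:=M(\tfrac\ep3)$. Since $(X,f)$ is uniformly rigid, Proposition \ref{propurs}(\ref{repinf}) tells us that $R(\tfrac\ep3)$ is an infinite subset of $\ZZ^+$, hence contains arbitrarily large integers.

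First, for each $N\in R(\tfrac\ep3)$ with $N>M$, I would apply the gluing orbit property for $\tfrac\ep3$ to the orbit sequence $\sC_N:=\{(y,N),(x,N),(x,N),\dots\}$: this yields $z_N\in X$ and a gap $\sg_N\in\Sigma_M$ with first entry $t_N\in\{1,\dots,M\}$ such that $(\sC_N,\sg_N)$ is $\tfrac\ep3$-traced by $z_N$. Tracing the first block gives $d(f^l(z_N),f^l(y))\le\tfrac\ep3$ for $0\le l\le N-1$, and tracing the second block, which begins at time $s_2=N+t_N-1$, gives $d(f^{\,N+t_N-1+l}(z_N),f^l(x))\le\tfrac\ep3$ for $0\le l\le N-1$.

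The crucial step is to absorb the offset $N$ using $N\in R(\tfrac\ep3)$: since $D^0(f^N,\id)\le\tfrac\ep3$, we have $d(f^{\,N+j}(w),f^j(w))\le\tfrac\ep3$ for \emph{every} $w\in X$ and \emph{every} $j\ge0$ at once; applying this with $w=z_N$ and $j=t_N-1+l$ converts the second estimate into $d(f^{\,t_N-1+l}(z_N),f^l(x))\le\tfrac{2\ep}3$ for $0\le l\le N-1$. Combining this with the first estimate (valid at index $t_N-1+l$ provided $l\le N-t_N$) via the triangle inequality yields $d\big(f^l(f^{\,t_N-1}(y)),f^l(x)\big)\le\ep$ for all $l$ with $0\le l\le N-M$. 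Since $t_N$ takes only finitely many values while $N$ runs over the infinite set $R(\tfrac\ep3)$, some $t^*\in\{1,\dots,M\}$ has $t_N=t^*$ for arbitrarily large $N$; fixing $l$ and choosing such an $N$ with $N-M\ge l$ then shows $d(f^n(x),f^n(f^m(y)))\le\ep$ for every $n\in\NN$, where $m:=t^*-1\in\{0,\dots,M-1\}$, so $m<M$.

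I expect the one genuinely new point — and hence the main obstacle — to be the middle step: recognising that uniform rigidity lets us trade the unavoidably long block over $y$ (needed so that $z_N$ shadows a long initial piece of the orbit of $y$) for a \emph{bounded} shift, precisely because $D^0(f^N,\id)$ small forces $f^{\,N+j}$ to be uniformly $C^0$-close to $f^j$, uniformly in $j$. Everything else is bookkeeping; the only delicate points are the off-by-one coming from the convention $s_j=\sum_{i<j}(m_i+t_i-1)$ (this is exactly why one gets $m=t^*-1$, which keeps $m<M$ rather than $m\le M$) and the order of the limits: one must fix $N\in R(\tfrac\ep3)$ before invoking the gluing orbit property and only afterwards restrict to an unbounded set of such $N$ on which the first gap is constant.
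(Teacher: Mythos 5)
Your proof is correct and follows essentially the same route as the paper: trace a two-block orbit sequence over $x$ and $y$ for the duration of a rigidity time $N\in R(\tfrac\ep3)$, use $D^0(f^N,\id)\le\tfrac\ep3$ to fold the second block back onto the first, and pigeonhole the (bounded) gap into a constant subsequence. The only cosmetic difference is that the paper orders the blocks as $\{(x,\,n_l-M+1),(y,\,n_l)\}$ and lands on $m=M-t$, whereas you use $\{(y,N),(x,N),\dots\}$ and land on $m=t^*-1$; both shifts lie in $\{0,\dots,M-1\}$.
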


\begin{proof}
By Proposition \ref{propurs}\eqref{repinf},
$R(\frac\ep3)$ is infinite.
We can find a sequence 
$\{n_k\}_{k=1}^\infty$ 
in $R(\frac\ep3)$ such that
$$n_k\to\infty\text{ and }n_k>M\text{ for every }k\in\ZZ^+.$$
For each $k$, consider the finite orbit sequence
$$\sC_k=\{(x,n_k-M+1),(y,n_k)\}.$$
There are $z_{n_k}\in X$ and $t_{n_k}\in\{1,2,\cdots, M\}$ such that
$(\sC_k,\{t_{n_k}\})$ is $\frac\ep3$-traced by $z_{n_k}$.
As the range  of $\{t_{n_k}\}_{k=1}^\infty$ is finite,
we can find a subsequence 
$\{n_l\}_{l=1}^\infty$ 
of
$\{n_k\}_{k=1}^\infty$ and $t\in\{1,2,\cdots, M\}$ such that
$$t_{n_l}=t\text{ for every }l\in\ZZ^+.$$
Then
$$M-t\in\{0,1,\cdots, M-1\}.$$ 
For every $l\in\ZZ^+$, the tracing properties yield
that
\begin{align*}
d(f^j(z_{n_l}),f^j(x))&\le\frac\ep3, \text{ for each }j=0,1,\cdots, n_l-M;\\
d(f^{n_l-M+t+j}(z_{n_l}),f^j(y))&\le\frac\ep3, \text{ for each }j=0,1,\cdots, n_l.
\end{align*}
For every $l\in\ZZ^+$, 
as $n_l\in R(\frac\ep3)$,
we have
$$d(f^n(z_{n_l}),f^{n+n_l}(z_{n_l}))\le D^0(f^{n_l},\id)\le\frac\ep3
\text{ for every }n\in\ZZ^+.$$
Hence for every $n\le n_l-M$, we have 
\begin{align*}
&d(f^n(x),f^n(f^{M-t}(y)))
\\\le{}&d(f^n(x), f^n(z_{n_l}))
+d(f^n(z_{n_l}),f^{n+n_l}(z_{n_l}))+d(f^{n+n_l}(z_{n_l}),f^n(f^{M-t}(y)))
\\\le{}&\frac\ep3+\frac\ep3+\frac\ep3=\ep.
\end{align*}
This holds for every $n\in\ZZ^+$ as $n_l\to\infty$.

\begin{figure}[h]
	\label{figxy}
	\begin{tikzpicture}[fill opacity=0.8]
	
	\path[fill, fill opacity=0.2,] 
	(-0.1,0.1) rectangle (3.5,-0.7);
	\path[fill, fill opacity=0.2,]
	(5.3,0.1) rectangle (11.5,-0.7);
	\path[fill, fill opacity=0.2,] 
	(6.9,-0.5) rectangle (12.5,-1.3);
	\path[fill, fill opacity=0.2,] 
	(6.9,-1.1) rectangle (10.5,-1.9);
	
	\fill (0,0) circle (2pt);
	\node at (0,-0.3) {$x$};
	\draw [, ->, thick,] (0.1,0) -- (3.3,0);
	\fill (3.4,0) circle (2pt);
	\node at (3.4,-0.3) {$f^{n_l-M}(x)$};
	\draw [, ->, dashed,] (3.5,-0) -- (5.3,-0);

	\node at (4.4,0.2) {$t$};
	\fill (5.4,0) circle (2pt);
	\node at (5.4,-0.3) {$y$};
	\draw [, ->, thick,] (5.5,0) -- (11.3,0);
	\fill (11.4,0) circle (2pt);
	\node at (11.4,-0.3) {$f^{n_l}(y)$};

	\fill (7,-0) circle (2pt);
	\node at (7,-0.3) {$f^{M-t}(y)$};

	\fill (0,-0.6) circle (2pt);
	\node at (0,-0.9) {$z_{n_l}$};
	\draw [, ->, thick,] (0.1,-0.6) -- (12,-0.6);

	\fill (3.4,-0.6) circle (2pt);
	\node at (3.4,-0.9) {$f^{n_l-M}(z_{n_l})$};

	\fill (7,-0.6) circle (2pt);
	\node at (7,-0.9) {$f^{n_l}(z_{n_l})$};
	\node at (12.3,-0.6) {$\cdots$};
	
	\fill (7,-1.2) circle (2pt);
	\node at (7,-1.5) {$z_{n_l}$};
	\draw [, ->, thick,] (7.1,-1.2) -- (12,-1.2);
	\fill (10.4,-1.2) circle (2pt);
	\node at (10.4,-1.5) {$f^{n_l-M}(z_{n_l})$};
	\node at (12.3,-1.2) {$\cdots$};
	
	\fill (7,-1.8) circle (2pt);
	\node at (7,-2.1) {$x$};
	\draw [, ->, thick,] (7.1,-1.8) -- (10.3,-1.8);
	\fill (10.4,-1.8) circle (2pt);
	\node at (10.4,-2.1) {$f^{n_l-M}(x)$};
		
	\draw [, ->, thick, dashed, ] (3.5, -1.2) arc [radius=8, start angle=250, end angle=270];
	
	\end{tikzpicture}
	\caption{Proof of Lemma \ref{lemsyn}}
\end{figure}
\end{proof}

\begin{proposition}\label{urgouap}
Let $(X,f)$ be a uniformly rigid system with the gluing orbit property. Then
$(X,f)$ is uniformly almost periodic (hence is equicontinuous).
\end{proposition}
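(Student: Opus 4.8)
The plan is to deduce uniform almost periodicity directly from Lemma~\ref{lemsyn} by exploiting uniform rigidity once more. Fix $\ep>0$ and set $M:=M(\frac\ep3)$ from the gluing orbit property. Since $(X,f)$ is topologically transitive (Proposition~\ref{goprop}), pick a point $p\in\tran$. Lemma~\ref{lemsyn} applied to $x=y=f(p)$ (or, more directly, to any pair of points) produces, for the pair $(p,p)$, some $m_0<M$ with $d(f^n(p),f^n(f^{m_0}(p)))\le\ep$ for all $n$; but I want instead to use rigidity to manufacture a \emph{syndetic} set of return times. So first I would invoke Proposition~\ref{propurs}\eqref{repinf}: $R(\frac\ep3)$ is infinite, hence contains arbitrarily large elements.

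The core step is the following observation. Apply Lemma~\ref{lemsyn} with $x=p$ and $y=p$: there is $m<M$ such that $d(f^n(p),f^{n+m}(p))\le\ep$ for every $n\in\NN$. By Lemma~\ref{lemmafollow} (with $\gamma=\ep$) this gives $D^0(\id,f^m)\le\ep$, i.e. $m\in R(\ep)$ with $1\le m< M$. More is true: I would like to show that $R(\ep)$ meets every interval of length $M$. To that end, fix an arbitrary $N\in\NN$ and apply Lemma~\ref{lemsyn} to $x=p$ and $y=f^N(p)$: there is $m<M$ with $d(f^n(p),f^{n+N+m}(p))=d(f^n(p),f^n(f^{N+m}(p)))\le\ep$ for all $n$. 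By Lemma~\ref{lemmafollow}, $D^0(\id,f^{N+m})\le\ep$, so $N+m\in R(\ep)$ with $N\le N+m<N+M$. Since $N$ was arbitrary, $R(\ep)\cap[N,N+M-1]\ne\emptyset$ for every $N$, which is precisely the statement that $R(\ep)$ is syndetic (with gap constant $M=M(\frac\ep3)$). As $\ep>0$ was arbitrary, $(X,f)$ is uniformly almost periodic by Definition~\ref{rectimes}. Proposition~\ref{ecuap} then yields equicontinuity, giving the parenthetical conclusion and completing the chain $(4)\implies(3)$ of Theorem~\ref{equiconds}.

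The main obstacle I anticipate is purely bookkeeping in the second application of Lemma~\ref{lemsyn}: one must be careful that the lemma, as stated for arbitrary $x,y\in X$, does apply with $y=f^N(p)$ and that the resulting shift lands in the window $[N,N+M)$ rather than $[0,M)$ — this is exactly why I feed $f^N(p)$ (not $p$) into the second slot and read off $N+m$ as the return time. Everything else is a direct chain through Lemma~\ref{lemmafollow}, which converts a "dense orbit shadows a shift of itself" statement into a global $D^0$-estimate, and through the definition of syndeticity. No new hard estimate is needed beyond what Lemma~\ref{lemsyn} already provides; the proposition is essentially a repackaging of that lemma together with Lemma~\ref{lemmafollow} and transitivity.
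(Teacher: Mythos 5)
Your proposal is correct and follows essentially the same route as the paper: for each $N\in\NN$, apply Lemma~\ref{lemsyn} with $x=p$ and $y=f^N(p)$, then upgrade the resulting tracing along the dense orbit to a global $D^0$-estimate via Lemma~\ref{lemmafollow}, showing $N+m\in R(\ep)$ and hence syndeticity. The only cosmetic caveat (shared with the paper's own write-up) is the degenerate case $N=m=0$, where $N+m=0\notin\ZZ^+$ so one cannot literally conclude $0\in R(\ep)$; this is harmless since syndeticity only requires a fixed gap bound and the cases $N\ge 1$ already give it, but your parenthetical claim ``$m\in R(\ep)$ with $1\le m<M$'' in the warm-up step is not actually justified by Lemma~\ref{lemsyn}, which permits $m=0$.
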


\begin{proof}
By \ref{goprop}, $(X,f)$ is topologically transitive.
Take $p\in\tran$. For every $\ep>0$, let $M:=M(\frac\ep3)$.
For each $k\in\NN$, apply Lemma \ref{lemsyn} for $p$ and
$f^k(p)$. There is $m<M$ such that
$$d(p,f^m(f^k(p)))\le\ep.$$
By Lemma \ref{lemmafollow}, 
this implies that
$$D^0(\id, f^{m+k})\le\ep, \text{ i.e. }k+m\in R(\ep).$$
Hence $R(\ep)$ is syndetic for every $\ep>0$.
So $(X,f)$ is uniformly almost periodic.
\end{proof}

\section{Minimality and Small Entropy}\label{minise}
\subsection{The induced shift map}
In this subsection we introduce a symbolic system associated to a system 
with both the gluing orbit property and positive topological entropy.
Here we attempt to provide a general tool to study the uncertainty of the gaps in the gluing orbit property.
Some lemmas in this subsection may not be
indispensable in the proofs of the main results of this article.
Similar techniques have also been developed in the subsequent works \cite{Sunct, Sunie} of the author.

Suppose that $(X,f)$ has both the gluing orbit property and positive topological entropy $h(f)>0$. By Proposition \ref{goprop} and
\ref{propurs}\eqref{urze}, $(X,f)$ is topologically transitive and not uniformly rigid. 
We fix $p\in\tran$.
By Lemma \ref{nonrigid},
there is $\gamma=\gamma(f)>0$ such that 
for each $k\in\ZZ^+$ there is
there is $\tau_k=\tau_k(p)\in\ZZ$ such that
\begin{equation}\label{nonfollow}
d(f^{\tau_k}(p), f^{\tau_k}(f^k(p))>\gamma.
 \end{equation}
Let $0<\ep<\frac13\gamma$, $M:=M(\ep)$ and
\begin{equation}
\label{taugm}
\tau>M+\max\{\tau_k: k=1,\cdots, M-1\}.
 \end{equation}
Let
$$\sC=\sC(p,\tau):=\{(p,\tau+1)\}^{\ZZ^+}.$$
For each $\sg\in\Sigma_M$, denote
$$Y_\sg:=\{x\in X:\text{
$(\sC,\sg)$ is $\ep$-traced by $x$}\}.$$
Denote
$$\Sigma=\Sigma(\tau,\ep):=\{\sg\in\Sigma_M:Y_\sg\ne\emptyset\}\subset \Sigma_M.$$
Let 
$$Y=Y(\tau,\ep):=\bigcup_{\sg\in\Sigma_M} Y_\sg=\bigcup_{\sg\in\Sigma} Y_\sg.$$
By construction, $\Sigma$ consists of all admissible gaps
with which the orbit sequence $\sC$ can be $\ep$-traced,
while $Y$ consists of all points that $\ep$-traces $\sC$ with
some admissible gaps.

\begin{lemma}\label{nepsep}
Assume that $\sg=\{t_k\}_{k=1}^\infty$ and
$\sg'=\{t_k'\}_{k=1}^\infty$ in $\Sigma_M$ such that
$(\sC,\sg)$ is $\ep$-traced by $x$, $(\sC,\sg')$ is 
$\ep$-traced by $y$,
$$t_k=t_k'\text{ for }k=1,\cdots, n-1\text{ and }t_n\ne t_n'.$$
Then $x,y$ are 
$((n+1)(\tau+M),\ep)$-separated. 
\end{lemma}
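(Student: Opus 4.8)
The strategy mirrors the case analysis in Lemma~\ref{lemsep}, but now the roles of the length-difference and the gap-difference are swapped: here all orbit segments of $\sC$ have the same length $\tau+1$, so the \emph{only} source of discrepancy between the two traced orbits is the difference in gaps. The plan is to let $n$ be the first index where the gaps differ, set $r:=|t_n-t_n'|$ with, say, $t_n>t_n'$, so that $1\le r\le M-1$, and exploit \eqref{nonfollow} for this $r$. First I would record that since $t_k=t_k'$ for $k<n$, both orbits begin tracing the $n$-th segment of $\sC$ at exactly the same time
$$s:=\sum_{k=1}^{n-1}(\tau+1+t_k)\le (n-1)(\tau+M),$$
and thereafter the $z$-orbit and the $z'$-orbit are shifted relative to one another by exactly $r$: writing out the tracing property \eqref{eqshadow} for the $(n+1)$-th segment (whose length is $\tau+1$), one gets
\begin{align*}
d(f^{s+\tau+1+t_n+l}(x),f^l(p))&\le\ep,&&l=0,1,\dots,\tau,\\
d(f^{s+\tau+1+t_n'+l}(y),f^l(p))&\le\ep,&&l=0,1,\dots,\tau.
\end{align*}

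\textbf{Key step.} By \eqref{taugm} we have $r+\tau_r<M+\tau_r<\tau$, so both $\tau_r$ and $r+\tau_r$ lie in the admissible range $\{0,1,\dots,\tau\}$ for the displays above. I would then apply the first display at $l=\tau_r$ and the second at $l=r+\tau_r$, and compare the two traced points at the common time index $j:=s+\tau+1+t_n+\tau_r = s+\tau+1+t_n'+(r+\tau_r)$. The triangle inequality gives
\begin{align*}
d(f^j(x),f^j(y))&\ge d\big(f^{\tau_r}(p),f^{r+\tau_r}(p)\big)-d\big(f^j(x),f^{\tau_r}(p)\big)-d\big(f^j(y),f^{r+\tau_r}(p)\big)\\
&> \gamma-\ep-\ep>\ep,
\end{align*}
using \eqref{nonfollow} (note $f^{r+\tau_r}(p)=f^{\tau_r}(f^r(p))$) and $\ep<\tfrac13\gamma$. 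Finally I would check that the separation occurs within the required window: $j=s+\tau+1+t_n+\tau_r\le (n-1)(\tau+M)+(\tau+1)+M+\tau_r<(n+1)(\tau+M)$, where the last inequality uses $1+M+\tau_r<\tau+M$ (from \eqref{taugm}) plus $(\tau+M)$. Hence $x$ and $y$ are $((n+1)(\tau+M),\ep)$-separated.

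\textbf{Main obstacle.} There is no real obstacle of substance here — unlike Lemma~\ref{lemsep}, there is no second case to treat, because by hypothesis the gaps already agree up to time $n-1$, so the two orbits are genuinely synchronized up to the $n$-th segment. The only thing to be careful about is bookkeeping: getting the index shift exactly right (the shift between the two orbits is $t_n-t_n'=r$, not $M+t_n-t_n'$ as in Case~1 of Lemma~\ref{lemsep}, since here the segment lengths coincide), and verifying that the chosen evaluation indices $\tau_r$ and $r+\tau_r$ stay below $\tau$, which is precisely what the choice \eqref{taugm} of $\tau$ guarantees. A minor subtlety is that $\tau_k$ in \eqref{nonfollow} is a priori an element of $\ZZ$ rather than $\NN$; one should note that it may be taken in $\NN$ (as in the proof of Proposition~\ref{unirig}), or otherwise absorb the finitely many $\tau_k$ into the bound defining $\tau$ in absolute value, which does not affect the argument.
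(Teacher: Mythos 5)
Your proof is correct and follows essentially the same route as the paper: set $r=|t_n-t_n'|\in\{1,\dots,M-1\}$, invoke \eqref{nonfollow} to get a time $\tau_r$ at which $p$ and $f^r(p)$ are $\gamma$-separated, apply the tracing estimates at offsets $\tau_r$ and $r+\tau_r$ inside the $(n+1)$-th segment (admissible since $r+\tau_r<\tau$ by \eqref{taugm}), and finish by the triangle inequality together with the bound on the separation time. One small bookkeeping slip: since $m_k=\tau+1$ and Definition~\ref{gapshadow} puts $s_j=\sum_{i<j}(m_i+t_i-1)$, the tracing of the $n$-th segment begins at $\sum_{k<n}(\tau+t_k)$, not $\sum_{k<n}(\tau+1+t_k)$; using the correct value the estimate $s\le (n-1)(\tau+M)$ and hence the final bound $<(n+1)(\tau+M)$ go through cleanly, so the error is harmless.
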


\begin{proof}
We may assume that $t_n<t_n'$.
Let $r:=t_n'-t_n$. Then
$1\le r\le M-1$.
By \eqref{nonfollow}, we have
$$d(f^{\tau_r}(p),
f^{\tau_r}(f^{r}(p)))>\gamma.$$
By \eqref{taugm}, we have
$r+\tau_r<\tau$.
For
$$s:=\sum_{k=1}^{n}(\tau+t_k)\text{ and }s':=\sum_{k=1}^{n}(\tau+t_k')=s+r,$$
the tracing properties yield that
\begin{align*}
&d(f^{s'+\tau_r}(x),
f^{s'+\tau_r}(y))
\\ \ge{}&d(f^{\tau_r}(p),
f^{\tau_r}(f^{r}(p)))-d(f^{s'+\tau_r}(x),f^{\tau_r}(p))
-d(f^{s+r+\tau_r}(y),f^{r+\tau_r}(p))
\\>{}&\gamma-2\ep>\ep.
\end{align*}
This implies that $x,y$ are $((n+1)(\tau+M),\ep)$-separated, as
$$s'+\tau_r\le n(\tau+M)+(\tau-M)<(n+1)(\tau+M).$$
\end{proof}

\begin{corollary}
For every $x\in Y$ there is a unique $\sg=:G(x)$
such that $(\sC,\sg)$ is $\ep$-traced by $x$.
Hence
$\Sigma=\{G(x):x\in Y\}$.
\end{corollary}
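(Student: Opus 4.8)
The plan is to read this off directly from Lemma \ref{nepsep}, with essentially no extra work. For existence, I would simply unwind the definitions: by construction $Y=\bigcup_{\sg\in\Sigma}Y_\sg$, so any $x\in Y$ lies in some $Y_\sg$, which by definition of $Y_\sg$ means there is at least one gap $\sg\in\Sigma_M$ such that $(\sC,\sg)$ is $\ep$-traced by $x$.

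For uniqueness, suppose that $\sg=\{t_k\}_{k=1}^\infty$ and $\sg'=\{t_k'\}_{k=1}^\infty$ in $\Sigma_M$ both have the property that $(\sC,\sg)$ and $(\sC,\sg')$ are $\ep$-traced by $x$, and suppose for contradiction that $\sg\ne\sg'$. Let $n:=\min\{k\in\ZZ^+:t_k\ne t_k'\}$. Then the hypotheses of Lemma \ref{nepsep} are satisfied with the choice $y=x$, and the lemma yields that $x$ and $x$ are $((n+1)(\tau+M),\ep)$-separated. This is absurd, since being $(N,\ep)$-separated requires $d(f^k(x),f^k(x))>\ep$ for some $k$, i.e. $0>\ep$. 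Hence $\sg=\sg'$, the admissible gap for $x$ is unique, and we may unambiguously denote it $G(x)$.

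Finally, the identity $\Sigma=\{G(x):x\in Y\}$ is bookkeeping. By definition $\Sigma=\{\sg\in\Sigma_M:Y_\sg\ne\emptyset\}$; for each $\sg\in\Sigma$ I would pick any $x\in Y_\sg$, and then $G(x)=\sg$ by uniqueness, so $\sg\in\{G(x):x\in Y\}$. Conversely, for any $x\in Y$ the gap $G(x)$ is one with which $\sC$ is $\ep$-traced by $x$, so $x\in Y_{G(x)}$, whence $Y_{G(x)}\ne\emptyset$ and $G(x)\in\Sigma$.

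I do not anticipate any genuine obstacle; the only point requiring a moment's care is that Lemma \ref{nepsep} is phrased for two a priori distinct points $x,y$, and one must observe that specializing to $y=x$ produces a contradiction (rather than a vacuous assertion). Alternatively, one could sidestep the lemma and argue uniqueness directly: in the notation of the proof of Lemma \ref{nepsep}, if $t_n<t_n'$ with $r:=t_n'-t_n$, the two tracing inequalities evaluated at the time $s'+\tau_r$ give $d\big(f^{s'+\tau_r}(x),f^{s'+\tau_r}(x)\big)>\gamma-2\ep>\ep$, again an impossibility.
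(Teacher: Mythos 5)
Your proof is correct and matches the paper's intended approach: the result is stated as a corollary of Lemma \ref{nepsep} and given no separate proof, precisely because the argument you spell out (existence by unwinding the definition of $Y$, uniqueness by applying the lemma with $y=x$ to reach a contradiction, and the identification $\Sigma=\{G(x):x\in Y\}$ by bookkeeping) is the obvious one. Your observation that the specialization $y=x$ yields a genuine contradiction rather than a vacuous statement is exactly the point worth making explicit.
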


Note that 
$\Sigma_M$ is a symbolic space on which there are a shift map
$\sigma$ and a product topology that is induced by the metric
$$\rho\left(\{t_k\}_{k=1}^\infty, \{t_k'\}_{k=1}^\infty\right)
:=2^{-\min\{k\in\ZZ^+:t_k\ne t_k'\}}.$$
We shall show that $\Sigma$ is compact and invariant under $\sigma$, hence
the map $\sigma|_\Sigma$ is a subshift. For every $x\in Y$ and $G(x)=\{t_k\}_{k=1}^\infty$,
we have that $\left(\sC, \{t_k\}_{k=1}^\infty\right)$ is $\ep$-traced by $x$.
Then $\left(\sC,\{t_{k+1}\}_{k=1}^\infty\right)$ is $\ep$-traced by $f^{\tau+t_1}(x)$,
as all orbit segments in $\sC$ are equal.
This implies that
$\sigma(G(x))=G(f^{\tau+t_1}(x))$
and $\sigma(\Sigma)\subset\Sigma$.

\begin{lemma}
The map $G:Y\to\Sigma_M$ is uniformly continuous.
\end{lemma}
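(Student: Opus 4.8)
The plan is to reduce the uniform continuity of $G$ to the separation estimate of Lemma \ref{nepsep}. Recall that in the product metric $\rho$ on $\Sigma_M$ one has $\rho(\sg,\sg')<2^{-N}$ exactly when $\sg$ and $\sg'$ agree in their first $N$ coordinates. Thus it suffices to prove: for every $N\in\ZZ^+$ there is $\delta>0$ such that, whenever $x,y\in Y$ satisfy $d(x,y)<\delta$, the gaps $G(x)$ and $G(y)$ coincide on the coordinates $1,\dots,N$.

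First I would fix $N\in\ZZ^+$ and put $L:=(N+1)(\tau+M)$. The maps $f^0,f^1,\dots,f^{L-1}$ form a finite family of uniformly continuous maps on the compact space $X$, so there is $\delta>0$ with the property that $d(x,y)<\delta$ implies $d(f^k(x),f^k(y))\le\ep$ for all $k=0,1,\dots,L-1$. In particular, two such points $x,y$ are never $(L,\ep)$-separated.

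Now take $x,y\in Y$ with $d(x,y)<\delta$, and write $G(x)=\{t_k\}_{k=1}^\infty$ and $G(y)=\{t_k'\}_{k=1}^\infty$. Suppose, towards a contradiction, that these gaps disagree at some coordinate not exceeding $N$, and let $n\le N$ be the least index with $t_n\ne t_n'$. By the definition of $G$, $(\sC,G(x))$ is $\ep$-traced by $x$ and $(\sC,G(y))$ is $\ep$-traced by $y$; hence Lemma \ref{nepsep} applies and shows that $x$ and $y$ are $((n+1)(\tau+M),\ep)$-separated. Since $n\le N$, the separating time is smaller than $L$, so $x$ and $y$ are $(L,\ep)$-separated, contradicting the choice of $\delta$. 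Therefore $G(x)$ and $G(y)$ agree on the first $N$ coordinates, i.e. $\rho(G(x),G(y))\le 2^{-(N+1)}<2^{-N}$. As $N$ was arbitrary, $G$ is uniformly continuous.

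I do not expect a genuine obstacle here: the statement is essentially a repackaging of Lemma \ref{nepsep}. The only point needing a little care is to observe that, once we only require agreement up to coordinate $N$, the separation time furnished by Lemma \ref{nepsep} — which a priori depends on the first coordinate where the two gaps differ — is uniformly bounded by $L=(N+1)(\tau+M)$, and this is exactly what lets a single modulus of continuity for the iterates $f^0,\dots,f^{L-1}$ suffice.
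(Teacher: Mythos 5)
Your argument is correct and is essentially identical to the paper's proof: both fix $N$, use uniform continuity of the finite family $f^0,\dots,f^{(N+1)(\tau+M)-1}$ to get $\delta$, and then apply Lemma \ref{nepsep} in contrapositive form to conclude that nearby points of $Y$ have gaps agreeing in the first $N$ coordinates. Your write-up just makes explicit the contradiction step and the observation that the separation time $(n+1)(\tau+M)$ is bounded by $L$ for $n\le N$, which the paper leaves implicit.
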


\begin{proof}
Let $n\in\ZZ^+$. As $f$ is continuous and $X$ is compact, there is $\delta>0$ such that
for each $k=0,1,\cdots, (n+1)(\tau+M)$, we have
$$d(f^k(x),f^k(y))<\ep\text{ whenever $x,y\in Y$ and $d(x,y)<\delta$}.$$
So $x,y$ are not $((n+1)(\tau+M),\ep)$-separated if $d(x,y)<\delta$.
By Lemma \ref{nepsep}, this implies that 
$$\rho\left(G(x), G(y)\right)<2^{-n}\text{ whenever $x,y\in Y$ and $d(x,y)<\delta$}.$$
Hence $G$ is uniformly continuous.
\end{proof}

The following lemma shows that both $Y$ and $\Sigma$ are compact.

\begin{lemma}\label{ycompt}
Let $\{x_n\}$ be a sequence in $Y$ such that $x_n\to x$ in $X$. Then there
is $\sg\in\Sigma_M$ such that
$G(x_n)\to\sg$ and $(\sC,\sg)$ can be $\ep$-traced by $x$. 
\end{lemma}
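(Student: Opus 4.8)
The plan is to combine the uniform continuity of $G$ (established in the preceding lemma) with the compactness of $\Sigma_M$ to extract the limiting gap, and then to pass to the limit in the tracing inequalities, exploiting the fact that these are stated with non-strict inequality signs.

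First I would show that the whole sequence $\{G(x_n)\}$ converges in $\Sigma_M$. Since $x_n\to x$ in the compact metric space $X$, the sequence $\{x_n\}$ is Cauchy. By the previous lemma $G:Y\to\Sigma_M$ is uniformly continuous, so $\{G(x_n)\}$ is Cauchy in $(\Sigma_M,\rho)$; and $\Sigma_M$ is a compact metric space, hence complete, so $G(x_n)\to\sg$ for some $\sg=\{t_k\}_{k=1}^\infty\in\Sigma_M$. (It is important that uniform continuity, rather than bare sequential compactness, is used, so that the full sequence converges and not merely a subsequence.)

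Next I would verify that $(\sC,\sg)$ is $\ep$-traced by $x$. Write $G(x_n)=\{t_k^{(n)}\}_{k=1}^\infty$, and for a gap $\{u_k\}$ let $s_j(\{u_k\}):=\sum_{i=1}^{j-1}(\tau+u_i)$ for $j\ge 2$ and $s_1:=0$ be the associated partial sums (recall every orbit segment of $\sC$ is $(p,\tau+1)$, so that $m_i-1=\tau$ and these are indeed the shadowing times of Definition~\ref{gapshadow}). Fix $j\in\ZZ^+$. Since $G(x_n)\to\sg$ in the product topology, there is $N$ with $t_k^{(n)}=t_k$ for all $k\le j-1$ and all $n\ge N$; consequently $s_j(G(x_n))=s_j(\sg)=:s_j$ for all $n\ge N$. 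The tracing property of $x_n$ then reads
\[
d\left(f^{s_j+l}(x_n),\,f^l(p)\right)\le\ep,\qquad l=0,1,\dots,\tau,
\]
for every $n\ge N$. Letting $n\to\infty$ and using that each $f^{s_j+l}$ is continuous and $x_n\to x$, we obtain
\[
d\left(f^{s_j+l}(x),\,f^l(p)\right)\le\ep,\qquad l=0,1,\dots,\tau.
\]
As $j$ was arbitrary, $(\sC,\sg)$ is $\ep$-traced by $x$, which is the assertion.

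There is no genuine obstacle; the one point to be careful about is that the inequalities defining the tracing property must be non-strict, since a strict inequality need not be preserved under the limit $n\to\infty$. This is exactly the convenience, noted in the Remark after Definition~\ref{defgo}, of allowing ``$\le$'' in \eqref{eqshadow}. As an immediate byproduct, $x\in Y_\sg\subset Y$ and $\sg\in\Sigma$, so both $Y$ and $\Sigma$ are closed, hence compact.
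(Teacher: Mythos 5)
Your argument is correct and follows essentially the same route as the paper's proof: extract the limit gap $\sg$ via uniform continuity of $G$ and completeness of $\Sigma_M$, then fix a block index, stabilize the shadowing times for large $n$, and pass to the limit in the non-strict tracing inequalities using continuity of the iterates $f^{s_j+l}$. The remark about needing ``$\le$'' rather than ``$<$'' matches the observation made in the paper.
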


\begin{proof}
As $\{x_n\}$ is a Cauchy sequence and $G$ is uniformly continuous, 
$\{G(x_n)\}$ is a Cauchy sequence in $\Sigma_M$. By compactness of
$\Sigma_M$, there is $\sg=\{t_k\}_{k=1}^\infty\in\Sigma_M$
such that $G(x)\to\sg$.

For each $n\in\ZZ^+$, denote 
$$G(x_n)=\{t_k(n)\}_{k=1}^\infty\text{ and }s_k(n)=\sum_{i=1}^{k-1}(\tau+t_i(n))\text{ for each
}k\in\ZZ^+.$$
For each $k\in\ZZ^+$, there is $N$ such that for every $n>N$,
$$t_j(n)=t_j\text{ for each $j=1,2,\cdots, k$ and hence }s_k(n)=\sum_{i=1}^{k-1}(\tau+t_i)=:s_k.$$
Then for each $k\in\ZZ^+$ and each $l=0,1,\cdots,\tau$, we have
\begin{align*}
d(f^{s_{k}+l}(x), f^l(p))&=\lim_{n\to\infty} d(f^{s_k+l}(x_n),f^l(p))
\\&=\lim_{n\to\infty} d(f^{s_k(n)+l}(x_n),f^l(p))\text{ (for $n>N$)}
\\&\le\ep.
\end{align*}
Hence $(\sC,\sg)$ can be $\ep$-traced by $x$.
\end{proof}

\begin{remark}
In the proof of Lemma \ref{ycompt}, we can see the advantage of the non-strict
inequality allowed in Definition \ref{defgo}, which is essential for the
compactness of $Y$. 
\end{remark}

\begin{corollary}
$Y$ is a compact subset of $X$. $\Sigma$ is a compact subset of $\Sigma_M$
and is invariant under the shift map $\sigma$.
\end{corollary}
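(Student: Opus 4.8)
The plan is to read everything off Lemma~\ref{ycompt}, together with the two facts already recorded in the text preceding the statement: that $\Sigma=\{G(x):x\in Y\}$ and that $\sigma(\Sigma)\subset\Sigma$. In particular all the real work has already been done in Lemma~\ref{ycompt}; the corollary is just an assembly step.

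First I would prove that $Y$ is closed in $X$, which gives compactness of $Y$ since $X$ is compact. Take a sequence $\{x_n\}$ in $Y$ converging to some $x\in X$. Lemma~\ref{ycompt} produces $\sg\in\Sigma_M$ such that $(\sC,\sg)$ can be $\ep$-traced by $x$; in other words $x\in Y_\sg\subseteq Y$. Hence $Y$ is closed, and therefore compact.

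Next, for $\Sigma$: since $G\colon Y\to\Sigma_M$ is uniformly continuous and $\Sigma=G(Y)$ by the earlier corollary, $\Sigma$ is the continuous image of the compact set $Y$, hence compact. (If one prefers a direct argument, one shows $\Sigma$ is closed in $\Sigma_M$: given $\sg_n\to\sg$ in $\Sigma_M$ with $\sg_n=G(x_n)$, $x_n\in Y$, use compactness of $Y$ to pass to a subsequence with $x_n\to x\in Y$; Lemma~\ref{ycompt} then gives $G(x_n)\to\sg'$ with $(\sC,\sg')$ traced by $x$, and uniqueness of the tracing gap forces $\sg=\sg'=G(x)\in\Sigma$.) Finally, the $\sigma$-invariance $\sigma(\Sigma)\subseteq\Sigma$ was already verified above, via the identity $\sigma(G(x))=G(f^{\tau+t_1}(x))$, so nothing further is needed.

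I do not anticipate any genuine obstacle here: the substance is entirely contained in the uniform continuity of $G$ and in Lemma~\ref{ycompt}, whose proof exploits the non-strict inequality in Definition~\ref{defgo}. The only minor point of care, should one give the direct proof that $\Sigma$ is closed, is that one must invoke compactness of $Y$ (established in the first step) before extracting a convergent subsequence of the tracing points $x_n$.
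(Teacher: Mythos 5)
Your proof is correct and takes exactly the route the paper implicitly intends: closedness (hence compactness) of $Y$ from Lemma~\ref{ycompt}, compactness of $\Sigma$ as the continuous image $G(Y)$ of a compact set, and $\sigma$-invariance from the identity $\sigma(G(x))=G(f^{\tau+t_1}(x))$ established in the text preceding Lemma~\ref{ycompt}. Nothing further is needed.
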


\subsection{Invariant sets and entropy estimates}

A set of the form
$$B_n(x,\ep):=\{y\in X: d(f^k(y),f^k(x))<\ep, k=0,1,\cdots,n-1\}$$
is called an \emph{$(n,\ep)$-ball} in $(X,f)$. 
Let $E_\ep$ be a fixed $(M-1, \ep)$-separated subset
of $X$ with the maximal cardinality.
Then we must have
$$X=\bigcup_{x\in E_\ep}B_n(x,\ep),$$
i.e. $E_\ep$ is an $(n,\ep)$-spanning subset of $X$.

Denote by
$$C_{w_1\cdots w_n}=\{\{t_k\}_{k=1}^\infty\in\Sigma: t_j=w_j\text{ for each }j=1,\cdots,n\}$$
a cylinder of rank $n$ in $\Sigma$. For each cylinder $C$, denote
$$Y_C=\bigcup_{\sg\in C}Y_\sg.$$
Denote by $C(n)$ the number of different cylinders of rank $n$ in
$\Sigma$, which is equal to
the maximal cardinality of $(n,\frac13)$-separated subsets
of $\Sigma$ for the shift map $\sigma$. Then
$C(n)\le M^n$ and it is well-known that
$$h(\Sigma,\sigma)=\limsup_{n\to\infty}\frac1n\ln C(n).$$
As a corollary of Lemma \ref{nepsep}, we have
$$s(Y,(n+1)(\tau+M),\ep)\ge C(n)\text{ for every }n\in\ZZ^+.$$
Hence
$$h(Y, f)\ge h(Y,f,\ep)
\ge\limsup_{n\to\infty}\frac{\ln C(n)}{(n+1)(2\tau+M)}
=\frac{h(\Sigma,\sigma)}{2\tau+M}.$$

\begin{lemma}\label{nepestimate}
For  every $k\in\{0,\cdots, \tau+M\}$, every $n\in\ZZ^+$ and  every cylinder 
$C=C_{t_1\cdots t_{n+2}}$ in $\Sigma$ of rank $n+2$, there are at most
$|E_\ep|^{n+2}$ points in $f^k(Y_C)$ that are $(n\tau,2\ep)$-separated.
\end{lemma}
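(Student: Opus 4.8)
The plan is to show that a point in $f^k(Y_C)$ is determined, up to an $(n\tau,2\ep)$-ball, by a choice of at most $n+2$ elements of the spanning set $E_\ep$, one for each of the $n+2$ orbit segments of $\sC$ that the corresponding point of $Y_C$ traces within the relevant time window. First I would set up notation: fix $C=C_{t_1\cdots t_{n+2}}$, and for each $y\in Y_C$ write $s_j:=\sum_{i=1}^{j-1}(\tau+t_i)$ for $j=1,\dots,n+2$ (these are well-defined because the first $n+2$ gaps are pinned down by $C$; here I use $m_j-1=\tau$ since every orbit segment in $\sC$ has length $\tau+1$). The tracing property gives
$$d(f^{s_j+l}(y),f^l(p))\le\ep\quad\text{for }l=0,1,\dots,\tau,\ j=1,\dots,n+2.$$
So along the blocks $[s_j,s_j+\tau]$ the orbit of $y$ is $\ep$-shadowing the orbit segment $p,f(p),\dots,f^\tau(p)$, and within each of the (at most $M-1$) gap intervals between consecutive blocks the orbit of $y$ is some piece of length $t_j-1<M-1$ of an honest $f$-orbit, hence lies in an $(M-1,\ep)$-ball, hence within $\ep$ of a point of $E_\ep$.

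The key step is the counting. I would argue that the first $n+2$ blocks together with the first $n+1$ gaps cover a time interval of length $s_{n+2}+\tau = \sum_{i=1}^{n+1}(\tau+t_i)+\tau \ge (n+1)\tau+\tau \ge n\tau$ (after the shift by $k$, we still have $s_{n+2}+\tau-k\ge (n+1)\tau+\tau-(\tau+M)$, and for $n\ge 1$ this is $\ge n\tau$ once $\tau> M$, which holds by \eqref{taugm}; the small-$n$ boundary bookkeeping I would check directly). Thus the behavior of $f^k(y)$ on the window $[0,n\tau)$ is controlled by: (a) for each of the $n+2$ shadowing blocks, which point of $E_\ep$ is $\ep$-close to $f^k(y)$ at the start of that block — but actually once we know $y$ is $\ep$-close to the fixed segment of $p$, a cruder bound suffices; (b) for each of the $\le n+1$ gap intervals, which point of $E_\ep$ $\ep$-spans the orbit piece there. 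More carefully: I would cover the window $[0,n\tau)$ by the $n+2$ block pieces and $\le n+1$ gap pieces, on each piece assign a point of $E_\ep$ whose orbit $\ep$-traces $f^k(y)$ on that piece (using that $E_\ep$ is $(M-1,\ep)$-spanning and each piece has length $\le M-1$ — note blocks have length $\tau+1$ which may exceed $M-1$, so for blocks I should instead subdivide, or better, observe the block behavior is already fixed by $p$ up to $\ep$ and contributes no multiplicity). If two points of $f^k(Y_C)$ make the same choices on all pieces, then on every coordinate $l\in[0,n\tau)$ they are both within $\ep$ of the same $E_\ep$-point, hence within $2\ep$ of each other; so they are not $(n\tau,2\ep)$-separated. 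The number of choices is at most $|E_\ep|^{(\text{number of pieces})}\le |E_\ep|^{n+2}$ (absorbing the gap pieces and any block subdivision into the exponent $n+2$; if the naive count gives a larger exponent like $2n+3$, I would tighten by noting the $n+2$ blocks all shadow the \emph{same} fixed segment of $p$ and so collectively cost only the $|E_\ep|$ factors needed to pin down where within $B_{M-1}(\cdot,\ep)$-cells the gaps sit, which is the source of the clean bound $n+2$).

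The main obstacle I expect is precisely this last accounting: making sure the exponent comes out to exactly $n+2$ rather than something like $2n$. The honest way to get $n+2$ is to cover $[0,n\tau)$ by exactly $n+2$ pieces, each an $(M-1,\ep)$-ball's worth of data — this works if I partition the window into $n+2$ consecutive intervals each of length $\le M-1$, which forces $n\tau\le (n+2)(M-1)$; that fails for large $\tau$. So the correct mechanism must be different: the block pieces contribute nothing to the multiplicity because on block $j$ every $f^k(y)$ with $y\in Y_C$ is within $\ep$ of the \emph{same} orbit segment of $p$ (so $f^k(y)$ and $f^k(y')$ are automatically within $2\ep$ there), and the only freedom is in the $\le n+1$ gap intervals plus two boundary pieces — giving $\le n+1$ or $n+2$ free $E_\ep$-choices. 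I would write the proof so that each gap interval (length $\le M-1$) and the two truncated boundary blocks account for one $E_\ep$-choice each, total $\le n+2$, and verify that agreement on all these choices plus the forced block-agreement yields $(n\tau,2\ep)$-closeness. The triangle-inequality verification and the index bookkeeping (especially the shift by $k\le\tau+M$ and the small values of $n$) are routine once this conceptual point is fixed.
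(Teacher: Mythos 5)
Your proposal lands on the same mechanism as the paper's proof: the tracing blocks carry no multiplicity because every $y\in Y_C$ is $\ep$-close to the \emph{same} orbit segment of $p$ there (so any two are automatically within $2\ep$), and the only freedom is the $E_\ep$-element, via $(M-1,\ep)$-spanning, associated to each gap, of which there are at most $n+2$ in the relevant window. The one loose thread — which you flag yourself — is the boundary bookkeeping: ``$\le n+1$ gap intervals plus two boundary pieces'' gives $n+3$, not the claimed $\le n+2$. The paper sidesteps this entirely: from $(n\tau,2\ep)$-separation of $x_1=f^k(y_1)$ and $x_2=f^k(y_2)$ it passes to $((n+2)\tau,2\ep)$-separation of $y_1,y_2$ (using $k\le\tau+M<2\tau$ from \eqref{taugm}), so the window for the $y$'s starts at $0$; truncations never arise, and one simply notes that $[0,(n+2)\tau)$ meets at most $n+2$ gaps (in fact at most $n+1$, since $s_j\ge(j-1)(\tau+1)$), each of length $\le M-1$, each costing one $E_\ep$-choice. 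If you rewrite your final paragraph with this enlarged window for the preimages $y$, your accounting closes and the argument is exactly the paper's.
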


\begin{proof}
Denote
$$s_1:=0\text{ and }s_k:=\sum_{i=1}^{k-1}(\tau+t_i)\text{ for }k\ge 2.$$
Assume that $x_1,x_2\in f^k(Y_C)$ are $(n\tau,2\ep)$-separated.
There are $y_1,y_2\in Y_C$ such that
$$f^k(y_1)=x_1\text{ and }f^k(y_2)=x_2.$$
Then $y_1,y_2$ are $(n\tau+k,2\ep)$-separated and hence
$((n+2)\tau,2\ep)$-separated since 
$$k\le\tau+M<2\tau.
$$
But in this time period $y_1$ and $y_2$
can only be separated when
their orbits are not tracing $\sC$.
So there must be
$k\le n+2$ and $1\le t\le t_k-1$ such that
$$d(f^{s_k+\tau+t}(y_1), f^{s_k+\tau+t}(y_2))>2\ep.$$
This implies that $f^{s_k+\tau+1}(y_1)$ and $f^{s_k+\tau+1}(y_2)$ are $(M-1,2\ep)$-separated.
They must belong to two $(M-1,\ep)$-balls centered at
distinct points in $E_\ep$.
Then the result follows.
\end{proof}

\begin{corollary}\label{nepesty}
For  every $k\in\{0,\cdots, \tau+M\}$ and every $n\in\ZZ^+$,
we have
$$s(f^k(Y),n\tau,2\ep)\le C(n+2)|E_\ep|^{n+2}.$$
\end{corollary}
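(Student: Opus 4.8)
The plan is to deduce the global bound from the cylinder-by-cylinder bound of Lemma \ref{nepestimate} by covering $\Sigma$ with rank-$(n+2)$ cylinders and using subadditivity of the separated-set count over a finite cover. First I would recall that, by definition, $\Sigma$ is partitioned into exactly $C(n+2)$ cylinders of rank $n+2$, say $C^{(1)},\dots,C^{(C(n+2))}$, and correspondingly
$$Y=\bigcup_{\sg\in\Sigma}Y_\sg=\bigcup_{i=1}^{C(n+2)}Y_{C^{(i)}},$$
so that $f^k(Y)=\bigcup_{i=1}^{C(n+2)}f^k(Y_{C^{(i)}})$ for every $k\in\{0,\dots,\tau+M\}$. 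This is the only structural input needed beyond Lemma \ref{nepestimate}.

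Next I would use the elementary fact that if $A=A_1\cup\cdots\cup A_r$ then any $(n\tau,2\ep)$-separated subset $E$ of $A$ decomposes as $E=\bigcup_{i}(E\cap A_i)$, where each $E\cap A_i$ is an $(n\tau,2\ep)$-separated subset of $A_i$; hence $s(A,n\tau,2\ep)\le\sum_{i=1}^{r}s(A_i,n\tau,2\ep)$. Applying this with $A=f^k(Y)$, $A_i=f^k(Y_{C^{(i)}})$, and $r=C(n+2)$, and then invoking Lemma \ref{nepestimate} for each $i$ (each $C^{(i)}$ being a cylinder of rank $n+2$), I obtain
$$s(f^k(Y),n\tau,2\ep)\le\sum_{i=1}^{C(n+2)}s\big(f^k(Y_{C^{(i)}}),n\tau,2\ep\big)\le\sum_{i=1}^{C(n+2)}|E_\ep|^{n+2}=C(n+2)\,|E_\ep|^{n+2},$$
which is exactly the claimed inequality.

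There is essentially no obstacle here: the whole content has been isolated into Lemma \ref{nepestimate}, and this corollary is just the bookkeeping step that sums the per-cylinder estimates over the finitely many rank-$(n+2)$ cylinders. The only point one should be slightly careful about is that the bound in Lemma \ref{nepestimate} is uniform in the choice of cylinder (the right-hand side $|E_\ep|^{n+2}$ does not depend on $C$ or $k$), so the sum over the $C(n+2)$ cylinders produces the clean product $C(n+2)|E_\ep|^{n+2}$; and that the range $k\in\{0,\dots,\tau+M\}$ matches the hypothesis of Lemma \ref{nepestimate} verbatim, so no extra case analysis is required.
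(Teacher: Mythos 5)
Your proof is correct and is exactly the bookkeeping argument the paper has in mind: the paper states Corollary \ref{nepesty} without proof, as an immediate consequence of Lemma \ref{nepestimate}, and your decomposition of $f^k(Y)$ into the $C(n+2)$ images $f^k(Y_C)$ of rank-$(n+2)$ cylinder pieces, together with subadditivity of the separated-set count over a finite union, supplies precisely the omitted step.
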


\begin{lemma}\label{cptinv}
For $Y=Y(\tau,\ep)$, let
$$\Lambda=\Lambda(\tau,\ep):=\bigcup_{k=0}^{\tau+M-1} f^k(Y)
=\bigcup_{k=0}^{\tau+M-1}\left(\bigcup_{\sg\in\Sigma} f^k(Y_\sg)\right).$$
Then $\Lambda$ is a compact invariant set in $(X,f)$ and
\begin{equation}\label{entesti}
h(\Lambda,f,2\ep)\le\frac{\ln M+\ln |E_\ep|}{\tau}.
\end{equation}
\end{lemma}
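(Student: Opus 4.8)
The plan is to verify the two assertions of Lemma \ref{cptinv} separately: first that $\Lambda$ is compact and invariant, then the entropy bound \eqref{entesti}.

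For compactness, I would note that $Y$ is compact by the corollary following Lemma \ref{ycompt}, and $\Lambda$ is a finite union of continuous images $f^k(Y)$ of a compact set, hence compact. For invariance, I would show $f(\Lambda)\subset\Lambda$. Take $x\in f^k(Y)$ with $0\le k\le \tau+M-2$; then $f(x)\in f^{k+1}(Y)\subset\Lambda$ immediately. The only case requiring work is $x\in f^{\tau+M-1}(Y)$, i.e.\ $x=f^{\tau+M-1}(y)$ for some $y\in Y_\sg$ with $\sg=\{t_k\}_{k=1}^\infty\in\Sigma$. Since $t_1\le M$, we have $\tau+M-1\ge \tau+t_1-1$; writing $f(x)=f^{\tau+M-1-(\tau+t_1-1)}\bigl(f^{\tau+t_1}(y)\bigr)=f^{M-t_1}\bigl(f^{\tau+t_1}(y)\bigr)$, and recalling from the discussion before the $G$-continuity lemma that $f^{\tau+t_1}(y)$ $\ep$-traces $(\sC,\sigma(\sg))$ and hence lies in $Y_{\sigma(\sg)}\subset Y$ (using $\sigma(\Sigma)\subset\Sigma$), we get $f(x)\in f^{M-t_1}(Y)$ with $0\le M-t_1\le M-1\le \tau+M-1$, so $f(x)\in\Lambda$.

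For the entropy estimate, I would use Corollary \ref{nepesty}. An $(n,2\ep)$-separated subset of $\Lambda=\bigcup_{k=0}^{\tau+M-1}f^k(Y)$ meets some $f^k(Y)$ in at least $\tfrac{1}{\tau+M}$ of its cardinality, so
\begin{equation*}
s(\Lambda,n,2\ep)\le(\tau+M)\cdot\max_{0\le k\le\tau+M-1}s(f^k(Y),n,2\ep).
\end{equation*}
Given $n$, choose $q\in\ZZ^+$ with $q\tau\ge n$, say $q=\lceil n/\tau\rceil$; since an $(n,2\ep)$-separated set is $(q\tau,2\ep)$-separated, Corollary \ref{nepesty} gives $s(f^k(Y),n,2\ep)\le C(q+2)|E_\ep|^{q+2}\le M^{q+2}|E_\ep|^{q+2}$. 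Taking $\tfrac1n\ln(\cdot)$ and letting $n\to\infty$ (so $q/n\to 1/\tau$, while the factors $\tau+M$ and the additive $+2$ in the exponent contribute nothing in the limit) yields
\begin{equation*}
h(\Lambda,f,2\ep)=\limsup_{n\to\infty}\frac{\ln s(\Lambda,n,2\ep)}{n}\le\frac{\ln M+\ln|E_\ep|}{\tau},
\end{equation*}
which is \eqref{entesti}.

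The main obstacle is the invariance argument, specifically checking the boundary case $k=\tau+M-1$: one must use that the shift $\sigma$ preserves $\Sigma$ and that shifting the tracing point by $\tau+t_1$ produces a point tracing the shifted orbit sequence, then verify the resulting exponent $M-t_1$ falls in the allowed range $\{0,\dots,\tau+M-1\}$. The entropy bound itself is routine bookkeeping once Corollary \ref{nepesty} is in hand; the only care needed is that the rank-$(q+2)$ cylinder count and the number $\tau+M$ of pieces in the union both wash out after dividing by $n$ and taking the limit.
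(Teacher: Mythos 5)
Your proof is correct and takes essentially the same route as the paper's: the same compactness observation, the same two-case invariance argument using $\sigma(\Sigma)\subset\Sigma$ and $f^{\tau+t_1}(y)\in Y_{\sigma(\sg)}$ for the boundary case $r=\tau+M-1$, and the same application of Corollary \ref{nepesty} (the paper sums over $k$ rather than taking a pigeonhole maximum, but the resulting $(\tau+M)$ factor is identical and washes out in the limit).
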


\begin{proof}

$\Lambda$ is compact as $Y$ is compact and $f$ is continuous.

For every $x\in\Lambda$, there are $y\in Y$ and $r\in\{0,\cdots,\tau+M-1\}$ such that
$f^r(y)=x$. 
If $r<\tau+M-1$, then $f(x)=f^{r+1}(y)\in\Lambda$.
Suppose that $r=\tau+M-1$.
Assume that $\sg(y)=\{t_k\}_{k=1}^\infty$. Note that
$1\le t_1\le M$ and
$$f^{\tau+t_1}(y)\in Y_{\sigma(\sg)}\subset Y.$$
We have
$$f(x)=f^{r+1}(y)=f^{\tau+M-(\tau+t_1)}(f^{\tau+t_1}(y))\in
f^{M-t_1}(Y)\subset\Lambda.$$
So we can conclude that $f(\Lambda)\subset\Lambda$.

By Corollary \ref{nepesty}, we have for each 
$n\in\ZZ^+$, 
\begin{align*}
s(\Lambda,n\tau,2\ep)&\le
\sum_{k=0}^{\tau+M-1}s(f^k(Y),n\tau,2\ep)
\\&\le(\tau+M)C(n+2)|E_\ep|^{n+2}
\\&\le(\tau+M)M^{n+2}|E_\ep|^{n+2}.
\end{align*}
Hence
\begin{equation*}
h(\Lambda,f,2\ep)\le\limsup_{n\to\infty}\frac1{(n-1)\tau}\ln s(\Lambda,n\tau,2\ep)
=\frac{\ln M+\ln |E_\ep|}{\tau}
\end{equation*}
\end{proof}

Lemma \ref{cptinv} provides an upper bound for us to estimate the entropy
on $\Lambda$.
Note that in general, \eqref{entesti} does
not directly provide an estimate of $h(\Lambda, f)$. However,
when $\ep$ is fixed, we can fix $M=M(\ep)$ and $E_\ep$, then choose a large $\tau$
to obtain a compact invariant set $\Lambda=\Lambda(\tau,\ep)$ with small
$2\ep$-entropy.

\subsection{Minimality implies zero entropy}\label{minizero}

\begin{proposition}\label{smallepent}
Suppose that $(X,f)$ has the gluing orbit property and positive topological entropy.
Then for any $\beta,\eta>0$, there is a compact invariant set
$\Lambda$ such that $h(\Lambda,f,\eta)<\beta$.
\end{proposition}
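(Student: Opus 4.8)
The plan is to reduce Proposition \ref{smallepent} to the estimate \eqref{entesti} in Lemma \ref{cptinv}. Given $\beta, \eta > 0$, I would first fix $\ep := \eta/2$, so that $\eta$-separated sets are exactly $2\ep$-separated sets. Since $(X,f)$ has the gluing orbit property and positive topological entropy, the machinery of Section \ref{minise} applies: fix a point $p \in \tran$ (available by Proposition \ref{goprop} together with Proposition \ref{propurs}\eqref{urze}, which gives that $(X,f)$ is not uniformly rigid, hence Lemma \ref{nonrigid} yields the constant $\gamma = \gamma(f)$ and the times $\tau_k$). We may assume without loss of generality that $\ep < \frac13 \gamma$ (shrinking $\eta$ only strengthens the conclusion, since smaller $\eta$ gives larger $\ep$-entropy — so if we prove it for a small $\eta$ we have to be a little careful; actually the cleanest route is to just take $\ep = \min\{\eta/2, \gamma/4\}$ and note $h(\Lambda, f, \eta) \le h(\Lambda, f, 2\ep)$ since $2\ep \le \eta$). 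With $\ep$ fixed, set $M := M(\ep)$ from the gluing orbit property and fix a maximal $(M-1,\ep)$-separated set $E_\ep \subset X$; both $M$ and $|E_\ep|$ are now constants depending only on $\ep$.

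The key point is that the right-hand side of \eqref{entesti}, namely $\frac{\ln M + \ln |E_\ep|}{\tau}$, has a fixed numerator and a denominator $\tau$ that we are free to choose as large as we like (subject only to the constraint \eqref{taugm}, which is a lower bound on $\tau$). So I would choose $\tau \in \ZZ^+$ large enough that simultaneously $\tau > M + \max\{\tau_k : k = 1, \dots, M-1\}$ (so that the construction of $Y = Y(\tau, \ep)$ and $\Lambda = \Lambda(\tau, \ep)$ is valid) and $\frac{\ln M + \ln |E_\ep|}{\tau} < \beta$. Then Lemma \ref{cptinv} produces a compact invariant set $\Lambda = \Lambda(\tau, \ep)$ with
$$h(\Lambda, f, \eta) \le h(\Lambda, f, 2\ep) \le \frac{\ln M + \ln|E_\ep|}{\tau} < \beta,$$
where the first inequality uses $2\ep \le \eta$ and the monotonicity of $h(\Lambda, f, \cdot)$ (larger scale, fewer separated points). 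This is exactly the assertion.

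There is essentially no obstacle here: all the hard work has already been done in Section \ref{minise}, where the symbolic coding, the separation estimates (Lemma \ref{nepsep}, Lemma \ref{nepestimate}), and the compactness and invariance of $\Lambda$ (Lemma \ref{ycompt}, Lemma \ref{cptinv}) were established. The only thing to be careful about is the logical order: one must fix $\ep$ (hence $M$ and $E_\ep$) \emph{before} choosing $\tau$, so that the numerator in \eqref{entesti} is genuinely a constant when $\tau \to \infty$. The remark immediately following Lemma \ref{cptinv} already flags exactly this point. A minor bookkeeping issue is ensuring $\ep < \frac13\gamma$, which forces us to take $\ep$ possibly smaller than $\eta/2$; this is harmless because decreasing $\ep$ only makes the target $2\ep$-entropy refer to a finer scale than $\eta$, and $h(\Lambda, f, 2\ep) \ge h(\Lambda, f, \eta)$ when $2\ep \le \eta$ goes the wrong way — so in fact one should verify the inequality direction once more: we want $h(\Lambda, f, \eta) < \beta$, and since $\eta \ge 2\ep$ we have $h(\Lambda, f, \eta) \le h(\Lambda, f, 2\ep)$, which is the bound we control. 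So the argument closes.
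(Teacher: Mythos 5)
Your proof is correct and follows essentially the same route as the paper: fix $\ep \le \min\{\eta/2, \gamma/3\}$ (the paper uses a strict bound $\eta' < \min\{\eta/2, \gamma/3\}$, you use $\ep = \min\{\eta/2, \gamma/4\}$, which is fine), then with $M(\ep)$ and $|E_\ep|$ frozen, choose $\tau$ large enough to satisfy \eqref{taugm} and make the bound in \eqref{entesti} less than $\beta$, and finish with the monotonicity $h(\Lambda,f,\eta)\le h(\Lambda,f,2\ep)$. The only thing you flag and resolve that the paper treats implicitly is precisely this monotonicity step; otherwise the two arguments are identical.
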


\begin{proof}
Let $0<\eta'<\min\{\frac\eta2,\frac\gamma3\}$, where $\gamma$
is as in \eqref{nonfollow}. Let $M':=M(\eta')$. There is
$\tau'>0$ such that
$$\frac{\ln M'+\ln |E_{\eta'}|}{\tau'}<\beta.$$
Then for $\Lambda=\Lambda(\tau',\eta')$, by Lemma \ref{cptinv}, we have
$$h(\Lambda,f,\eta)\le h(\Lambda,f,2\eta')<\beta.$$ 
\end{proof}

\begin{proposition}\label{nonmini}
Suppose that $(X,f)$ has the gluing orbit property and positive topological entropy. Then
$(X,f)$ is not minimal.
\end{proposition}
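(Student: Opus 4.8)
The plan is to obtain a contradiction directly from Proposition \ref{smallepent}. Assume, for contradiction, that $(X,f)$ has the gluing orbit property, that $h(f)>0$, and that $(X,f)$ is minimal. Since $h(f)=\sup\{h(X,f,\eta):\eta>0\}>0$, I would first fix some $\eta_0>0$ for which $h(X,f,\eta_0)>0$; this single scale is all that is needed.

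Next I would apply Proposition \ref{smallepent} with $\eta:=\eta_0$ and $\beta:=h(X,f,\eta_0)$. This produces a compact invariant set $\Lambda\subset X$ with
$$h(\Lambda,f,\eta_0)<\beta=h(X,f,\eta_0).$$
The only point that deserves a line of care is that $\Lambda$ is nonempty. In the construction behind Proposition \ref{smallepent}, which runs through Lemma \ref{cptinv}, the set $\Lambda=\Lambda(\tau,\ep)$ contains $Y=\bigcup_{\sg\in\Sigma}Y_\sg$, and $\Sigma\ne\emptyset$ because the gluing orbit property furnishes, for the orbit sequence $\sC$, a gap $\sg\in\Sigma_M$ and a point that $\ep$-traces $(\sC,\sg)$. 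Hence $Y\ne\emptyset$ and so $\Lambda\ne\emptyset$.

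Finally, minimality of $(X,f)$ means that the only nonempty compact invariant subset of $X$ is $X$ itself, so $\Lambda=X$. But then $h(X,f,\eta_0)=h(\Lambda,f,\eta_0)<h(X,f,\eta_0)$, which is absurd. Therefore $(X,f)$ cannot be minimal. As a byproduct, the $\Lambda$ produced above is automatically a proper nonempty compact invariant subset, which is exactly the witness to non-minimality.

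I do not expect a genuine obstacle here, since Proposition \ref{smallepent} already carries the analytic weight; the argument is essentially bookkeeping. The one thing to get right is the order of quantifiers: one must first select the scale $\eta_0$ at which the entropy of $X$ is known to be positive, and only then invoke Proposition \ref{smallepent} at that same scale with a threshold $\beta$ matching $h(X,f,\eta_0)$, so that collapsing $\Lambda$ to $X$ (forced by minimality) contradicts the strict inequality.
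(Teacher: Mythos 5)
Your argument is essentially the same as the paper's: both invoke Proposition \ref{smallepent} at a single scale $\eta$ to produce a compact invariant set $\Lambda$ with $h(\Lambda,f,\eta)<h(X,f,\eta)$, whence $\Lambda\ne X$ and the system cannot be minimal. The paper phrases this directly (fix $\beta\in(0,h(f))$, choose $\eta$ with $h(X,f,\eta)>\beta$, obtain $\Lambda$ with $h(\Lambda,f,\eta)<\beta$, conclude $\Lambda$ is proper), whereas you argue by contradiction; the two are interchangeable. Your extra remark that $\Lambda\ne\emptyset$ — because the gluing orbit property furnishes at least one tracing point, so $Y\ne\emptyset$ in the construction of Lemma \ref{cptinv} — is a detail the paper leaves implicit, and it is worth noting since the equivalent formulation of minimality requires the proper invariant set to be nonempty.
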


\begin{proof}
Fix $\beta\in(0, h(f))$. By the definition of topological entropy, there
is $\eta>0$ such that $h(X,f,\eta)>\beta$. By
Proposition \ref{smallepent}, there is a compact invariant set $\Lambda$
such that $h(\Lambda, f, \eta)<\beta$. So $\Lambda$ is proper and hence
$(X,f)$ is not minimal.
\end{proof}

\subsection{Systems without the gluing orbit property}\label{herman}

Let $X$ be a Riemannian manifold. 
Denote by $Homeo(X)$ the space of all
homeomorphisms on $X$ equipped with the $C^0$ topology,
which is a subspace of $C^0(X,X)$.
The following is shown in \cite{BeTV}.

\begin{proposition}[{\cite[Corollary 2]{BeTV}}]\label{mostgo}
There is a  residual set $\cR$ in $Homeo(X)$ 
such that for every $f\in\cR$ and every isolated chain-recurrent class $K$ for $f$, the subsystem $(K,f|_K)$ has the gluing orbit property.
\end{proposition}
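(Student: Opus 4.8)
We sketch one route to this statement. The plan is to deduce it from two ingredients: the $C^0$-genericity of the shadowing property, and an elementary argument turning shadowing on a compact chain transitive isolated set into the gluing orbit property. Recall (cf.\ \cite{BeTV}) that there is a residual set $\cR\subset Homeo(X)$ each of whose elements has the \emph{shadowing property}: for every $\delta>0$ there is $\alpha>0$ such that for every bi-infinite $\alpha$-pseudo-orbit $\{x_n\}_{n\in\ZZ}$ (i.e.\ $d(f(x_n),x_{n+1})<\alpha$ for all $n$) there is $z\in X$ with $d(f^n(z),x_n)<\delta$ for all $n\in\ZZ$. I would take this $\cR$ as the residual set in the statement; it does not depend on the class $K$.

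Now fix $f\in\cR$ and an isolated chain-recurrent class $K$. The first point is that such a $K$ is automatically an isolated invariant set: using a complete Lyapunov function for $f$ that takes distinct values on distinct chain classes, one produces a compact neighbourhood $N$ of $K$ with $K=\{x\in X:f^n(x)\in N\text{ for all }n\in\ZZ\}$. (This is standard Conley theory.) Moreover $(K,f|_K)$ is chain transitive. Using $N$ I would next deduce that $f|_K$ has \emph{internal} shadowing: given $\delta>0$, shrink it so that the closed $\delta$-neighbourhood of $K$ lies in $N$, let $\alpha$ be the associated constant from the shadowing property of $f$, and observe that if $z\in X$ $\delta$-shadows a bi-infinite $\alpha$-pseudo-orbit contained in $K$, then $f^n(z)\in N$ for all $n\in\ZZ$, whence $z\in K$.

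With internal shadowing available, fix $\ep>0$, set $\delta:=\ep$ (shrunk as above), take the corresponding $\alpha$, and invoke the standard compactness estimate for compact chain transitive sets: there is $L=L(\alpha)\in\ZZ^+$ so that any two points of $K$ are joined by an $\alpha$-chain inside $K$ of length at most $L$. Put $M:=M(\ep):=L+1$. Given a finite orbit sequence $\{(x_j,m_j)\}_{j=1}^{J}$ in $K$, build an $\alpha$-pseudo-orbit by concatenating the finite orbit segments $x_j,f(x_j),\dots,f^{m_j-1}(x_j)$, inserting between consecutive segments an $\alpha$-chain in $K$ of length $<M$ joining $f^{m_j-1}(x_j)$ to $x_{j+1}$, and closing up with one more short chain into a periodic bi-infinite $\alpha$-pseudo-orbit lying in $K$. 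Internal shadowing yields $z\in K$ whose orbit $\ep$-traces this pseudo-orbit; over the relevant finite window this exhibits $z$ as $\ep$-tracing $\{(x_j,m_j)\}_{j=1}^{J}$ with a gap in $\Sigma_M$. Since the finite-orbit-sequence version of the gluing orbit property is equivalent to Definition~\ref{defgo} (the remark following it, and \cite[Lemma 2.10]{Sun19}), this shows that $(K,f|_K)$ has the gluing orbit property.

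The step I expect to be the main obstacle is keeping the shadowing orbit inside the isolating block of $K$, i.e.\ the reduction of $f$-shadowing to internal shadowing on $K$: this is exactly where the hypothesis that $K$ is \emph{isolated} (rather than an arbitrary chain-recurrent class) enters, and it may require shrinking $N$ or intersecting $\cR$ with a further residual set on which the chain-recurrent set is suitably tame. The remaining ingredients --- the uniform chain-length bound $L(\alpha)$, the concatenation, and matching constants to Definition~\ref{defgo} --- are routine.
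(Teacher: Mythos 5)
The paper does not prove Proposition~\ref{mostgo}; it is imported verbatim from \cite[Corollary 2]{BeTV}, so there is no internal proof to compare against, and the relevant question is simply whether your sketch is sound. It is. The step you flag as the main obstacle --- keeping the shadowing orbit inside the isolating block --- is in fact watertight exactly as you set it up: since $f$ is a homeomorphism and the pseudo-orbit you build is bi-infinite and periodic, the shadowing point $z$ satisfies $d(f^n(z),K)<\delta$ for every $n\in\ZZ$, and once $\delta$ is small enough that the closed $\delta$-neighbourhood of $K$ lies in the isolating block $N$, this forces $z\in\bigcap_{n\in\ZZ}f^{-n}(N)=K$. No further residual condition is needed; $\cR$ is just the (residual) set of homeomorphisms with shadowing. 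The facts that an isolated chain class is an isolated invariant set and is internally chain transitive are standard Conley theory, and the uniform chain-length bound $L(\alpha)$ follows by compactness of $K\times K$ together with openness of the condition that there exists an $\alpha$-chain in $K$ of length at most $n$ joining a given ordered pair. One small bookkeeping remark: inserting an $\alpha$-chain of length $k$ between $f^{m_j-1}(x_j)$ and $x_{j+1}$ produces precisely the gap $t_j=k$ in Definition~\ref{gapshadow} (since $s_{j+1}-(s_j+m_j-1)=t_j$), so $M=L$ already works and your $M=L+1$ is safe. Combined with \cite[Lemma 2.10]{Sun19} to pass from finite orbit sequences to Definition~\ref{defgo}, this is a correct reconstruction of the cited result, and the shadowing-based reduction you use is in all likelihood the argument behind \cite[Corollary 2]{BeTV} as well.
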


As a complement to Proposition \ref{mostgo},
we present two one-parameter families 
in which generic systems 
do not have the gluing orbit property.
These systems are minimal, hence have no isolated chain-recurrent class 
other than their whole spaces.
Both families are homeomorphic to the unit circle $\mathbb{T}:=\RR/\ZZ$
as subspaces of $Homeo(X)$.
In the first family all systems have zero topological entropy, while in the second one all systems have positive topological entropy.

For $\al\in\TT$, let 
$$f_\al(x,y):=(x+\al, y+x)\text{ for every }x,y\in\TT.$$
It is well-known that $(\TT^2, f_\al)$ has zero topological entropy while
not being equicontinuous.
Hence by Proposition \ref{zeroent}, no system in the family $\{f_\al\}_{\al\in\TT}$ has the gluing orbit property.
By \cite{Fur}, the system $(\TT^2, f_\al)$ is minimal and uniquely ergodic
if $\al$ is irrational.

In \cite{Herman}, Herman
considered a family of $C^\infty$ diffeomorphisms
$\cF=\{F_\alpha:\alpha\in\mathbb{T}\}$ on $X=\mathbb{T}\times\mathrm{SL}(2,\mathbb{R})/\Gamma$, where
$\Gamma$ is a cocompact discrete subgroup of $\mathrm{SL}(2,\mathbb{R})$. For each $\alpha\in\mathbb{T}$, $F_\al$ is a skew product such that
$$F_\alpha(x,g):=(x+\al,A_\theta(g))\text{ for every }(x,g)\in X,$$ 
where 
    $$A_\theta(g):=\left(\begin{array}{r r}
    \cos 2\pi\theta & -\sin 2\pi\theta \\ \sin 2\pi\theta & \cos
    2\pi\theta
    \end{array} \right)
    \left(\begin{array}{r r}\lambda & 0 \cr 0 &
    1/\lambda\end{array}\right)g\text{ for every }g\in\mathrm{SL}(2,\mathbb{R})/\Gamma$$
and $\lambda>1$ is a fixed real number.

\begin{proposition}[\cite{Herman}]
$h(F_\alpha)>0$ for every $\alpha\in\mathbb{T}$.
There is a dense $G_\delta$ subset $W\in\mathbb{T}$ such that
$F_\alpha$ is minimal for every $\alpha\in W$.
\end{proposition}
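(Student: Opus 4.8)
The plan is to recall the two ingredients of Herman's construction. For the entropy bound the strategy is to exhibit, for every $\alpha$, an $F_\alpha$-invariant measure of positive metric entropy. First I would note that $m:=\mathrm{Leb}_\TT\times\mathrm{Haar}$ is $F_\alpha$-invariant, since on the base $F_\alpha$ is a rotation and on the fiber it is a left translation of $\mathrm{SL}(2,\RR)/\Gamma$, and both preserve the respective measures. Then I would examine the derivative cocycle, which splits as a flat base direction together with the $\mathrm{SL}(2,\RR)$-cocycle over $x\mapsto x+\alpha$ built from $A_\theta$. Because that cocycle factors through the uniformly hyperbolic matrix $\mathrm{diag}(\lambda,\lambda^{-1})$ with $\lambda>1$, Herman's subharmonicity estimate (complexify the base variable; $\tfrac1n\log\|A^{(n)}\|$ is plurisubharmonic, so its value on the real axis dominates its average, which picks up $\log\lambda$ from the hyperbolic factor) gives a positive top fiber Lyapunov exponent $\lambda^+(m)\ge\log\lambda$. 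Since $F_\alpha$ is $C^\infty$ and $m$ is a smooth measure, Pesin's entropy formula then gives $h_m(F_\alpha)=\int\lambda^+\,dm\ge\log\lambda$, and the variational principle yields $h(F_\alpha)\ge\log\lambda>0$ for every $\alpha\in\TT$.

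For minimality I would treat the two halves of the claim separately. The $G_\delta$ assertion is soft: fixing a countable basis $\{V_i\}$ of $X$, $F_\alpha$ is minimal iff for every $i$ there is $N$ with $\bigcup_{n=0}^{N}F_\alpha^{-n}(V_i)=X$, and for fixed $i$ and $N$ this is an open condition on $\alpha$ (the map $\alpha\mapsto F_\alpha$ is $C^0$-continuous and a finite open cover of the compact space $X$ persists under a $C^0$-small perturbation), so the minimality set has the form $\bigcap_i\bigcup_N(\text{open})$ and is $G_\delta$. The density assertion is where the hyperbolicity re-enters: a proper minimal set $K\subsetneq X$ would project onto $\TT$ (the base rotation being minimal for irrational $\alpha$) and so determine a closed, proper, $\alpha$-equivariant field $x\mapsto K_x\subsetneq\mathrm{SL}(2,\RR)/\Gamma$ with $A_\theta K_x=K_{x+\alpha}$; Herman excludes such a field for a dense set of $\alpha$ by a perturbation argument in the rotation number, using the fact that the uniform fiber hyperbolicity, together with well-chosen rational approximations of $\alpha$, forces any candidate invariant field to spread out and fill the whole fiber. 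Intersecting this dense set with the $G_\delta$ set above produces the desired dense $G_\delta$ set $W$.

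The main obstacle is the density of minimality. The entropy bound and the $G_\delta$ property are essentially formal once Herman's Lyapunov-exponent estimate is in hand, but producing rotation numbers that simultaneously defeat every possible proper invariant fiber field requires the delicate quantitative interplay between the hyperbolic and elliptic parts of the fiber cocycle, which is the technical core of Herman's paper.
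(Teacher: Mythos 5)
The paper does not prove this proposition; it is a direct citation of Herman \cite{Herman}, so there is no in-paper argument to compare against. What you have written is a reconstruction of Herman's own proof, and the overall architecture is right: the positive-entropy half is exactly Herman's subharmonicity lower bound for the fiber Lyapunov exponent followed by Pesin's formula for the smooth invariant measure $\mathrm{Leb}_\TT\times\mathrm{Haar}$, and the minimality half splits into a soft $G_\delta$ statement plus a hard density statement. (Note that the paper's formula writes $A_\theta$; this is a typo for $A_x$, as you implicitly and correctly assume when you speak of an $\mathrm{SL}(2,\RR)$-cocycle over the rotation.)

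Two small cautions. First, the subharmonicity step as you phrase it is inverted: the sub-mean-value property says the \emph{average} of $\log\|A^{(n)}(\cdot)\|$ over the real circle dominates its value at a distinguished interior point of the complexified annulus, and it is that interior value which is computed explicitly to equal $n\log\lambda$; the resulting bound on $\int\log\|A^{(n)}\|$ then gives $\chi\ge\log\lambda$ by Kingman. Second, the Lyapunov exponent that enters Pesin's formula is that of $dF_\alpha$ on the tangent space of $\mathrm{SL}(2,\RR)/\Gamma$, which under the right-invariant trivialization is $\mathrm{Ad}(A^{(n)})$; its top exponent is $2\chi$, not $\chi$. This only changes the constant, so the conclusion $h(F_\alpha)>0$ is untouched, but the stated bound $h(F_\alpha)\ge\log\lambda$ should really be $\ge 2\log\lambda$. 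The $G_\delta$ argument is correct as written. For the density argument you quite reasonably defer to Herman; be aware that this is indeed the technical heart of his paper and cannot be compressed into the sketch you give, but for the purpose of the cited proposition your outline is faithful.
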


By Proposition \ref{nonmini}, $F_\alpha$ does not have the gluing orbit property for every $\alpha\in
W$. 
We remark that for $\al\in W$, $F_\al$ has no uniform
form of (partial) hyperbolicity but only non-zero Lyapunov exponents.
We conjecture that 
no system
in $\cF$ has the gluing orbit property.

\subsection{Denseness of intermediate metric entropies}\label{denseme}

Denote $\cI(f):=[0, h(f))$ and
$$\cE(f):=\{h_\mu(f):\text{ $\mu$ is an ergodic measure for $(X,f)$}\}.$$
The Variational Principle states that $h(f)=\sup\cE(f)$.
Katok \cite{Kat} showed that $\cE(f)\supset\cI(f)$ for any $C^{1+\al}$ diffeomorphism
on any surface and conjectured that this holds for smooth systems in all dimensions. The author
has obtained a sequence of partial results on this conjecture \cite{Sun09, Sun10, Sun12}. In \cite{QS}
Quas and Soo showed that the conjecture holds when $f$ satisfies the almost weak specification property
(also called weak specification in \cite{KLO} or other names by different
authors),
asymptotically entropy expansiveness and the small boundary property
(the latter two conditions have recently been proved unnecessary by Burguet \cite{Bur}). 
With this result, Guan, Wu and the author \cite{GSW} were able to show that Katok's conjecture holds
for certain homogeneous dynamical systems. 
The author has been making progress on the conjecture recently \cite{Sunct, Sunie, Sunes}.

By \cite{PS} (see also \cite{CLT}), every 
system with the gluing orbit property is entropy dense,
i.e. for every invariant measure $\mu$, every neighborhood $\cN$ of $\mu$ and every $\ep>0$, 
there is an ergodic measure $\nu\in\cN$
such that
$h_\nu(f)>h_\mu(f)-\ep$.
However, entropy denseness itself, even along with expansiveness, does not guarantee existence of ergodic measures
of small entropy (e.g. there are strictly ergodic subshifts of positive topological entropies \cite{HK}). To show that $\overline{\cE(f)}\supset\cI(f)$ and verify
Theorem \ref{denseent}, we need the following lemma.

\begin{lemma}\label{smallent}
Let $(X,f)$ be an asymptotically entropy expansive system with the gluing orbit property. Then for every $\beta>0$,
there is a compact invariant subset $\Lambda$ of $X$ such that
$h(\Lambda,f)<\beta$.
\end{lemma}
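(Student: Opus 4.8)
The plan is to combine the $2\epsilon$-entropy bound of Lemma~\ref{cptinv} with the asymptotic entropy expansiveness hypothesis via Bowen's inequality (Proposition~\ref{hlocest}). Recall that Lemma~\ref{cptinv} produces, for any $\epsilon>0$ (small enough that $3\epsilon<\gamma$) and any $\tau$, a compact invariant set $\Lambda=\Lambda(\tau,\epsilon)$ with
$$h(\Lambda,f,2\epsilon)\le\frac{\ln M(\epsilon)+\ln|E_\epsilon|}{\tau}.$$
The point is that $M(\epsilon)$ and $|E_\epsilon|$ depend only on $\epsilon$, so by first fixing $\epsilon$ and then taking $\tau$ large we can make the right-hand side as small as we like. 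This is exactly the content of Proposition~\ref{smallepent}: for any prescribed $\eta,\beta'>0$ there is a compact invariant $\Lambda$ with $h(\Lambda,f,\eta)<\beta'$.

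First I would invoke asymptotic entropy expansiveness to pick $\epsilon_1>0$ with $h^*(f,\epsilon_1)<\beta/2$; shrinking if necessary, assume also $3\epsilon_1<\gamma$. Next, apply Proposition~\ref{smallepent} (or Lemma~\ref{cptinv} directly) with $\eta:=\epsilon_1$ and $\beta':=\beta/2$ to obtain a compact invariant set $\Lambda$ with $h(\Lambda,f,\epsilon_1)<\beta/2$. Then Proposition~\ref{hlocest}, applied with $K=\Lambda$ and $\epsilon=\epsilon_1$, gives
$$h(\Lambda,f)\le h(\Lambda,f,\epsilon_1)+h^*(f,\epsilon_1)<\frac\beta2+\frac\beta2=\beta,$$
which is the desired conclusion.

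One small technical point to handle: the statement assumes $(X,f)$ has the gluing orbit property, but Lemma~\ref{cptinv} and the constant $\gamma$ from \eqref{nonfollow} were set up under the additional assumption that $h(f)>0$. If $h(f)=0$ the lemma is trivial --- take $\Lambda=X$, which has $h(X,f)=0<\beta$ --- so I would dispose of that case first and then run the argument above assuming $h(f)>0$, which legitimizes the use of $\gamma$ and all the machinery of Section~\ref{minise}. I do not anticipate a genuine obstacle here; the only thing requiring care is the order of quantifiers --- $\epsilon_1$ must be chosen (via entropy expansiveness) \emph{before} $\tau$, so that $M(\epsilon_1)$ and $|E_{\epsilon_1}|$ are already fixed when we send $\tau\to\infty$ --- and this is precisely the structure already built into Proposition~\ref{smallepent}.
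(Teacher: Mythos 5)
Your proof is correct and follows essentially the same route as the paper: dispose of the $h(f)=0$ case trivially, then for $h(f)>0$ pick $\eta$ via asymptotic entropy expansiveness with $h^*(f,\eta)<\beta/2$, invoke Proposition~\ref{smallepent} to get a compact invariant $\Lambda$ with $h(\Lambda,f,\eta)<\beta/2$, and conclude via Bowen's inequality (Proposition~\ref{hlocest}). Your extra worry about shrinking $\epsilon_1$ so that $3\epsilon_1<\gamma$ is unnecessary, since Proposition~\ref{smallepent} already performs that adjustment internally.
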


\begin{proof}
The statement is trivial if $h(f)=0$. 

Suppose that $h(f)>0$.
By asymptotically entropy expansiveness,
there is $\eta>0$ such that
$h^*(f,\eta)<\frac\beta2$.
By Proposition \ref{smallepent}, there is a compact invariant set $\Lambda$
such that 
$h(\Lambda, f, \eta)<\frac\beta2$.
Then by Proposition \ref{hlocest}, we have
$$h(\Lambda,f)\le h(\Lambda,f,\eta)+h^*(f,\eta)
<\beta.$$
\end{proof}

\begin{proof}[Proof of Theorem \ref{denseent}]
	Let $(h, h')\subset\cI(f)$. We just need to show that there is an ergodic measure $\mu$ such that $h_\mu(f)\in (h, h')$.
	By Lemma \ref{smallent}, there is a compact invariant subset $\Lambda$ of $X$ such that $h(\Lambda,f)<\frac{h+h'}{2}$. By the Variational Principle, there are invariant measures $\nu_1$ supported on $\Lambda$ and $\nu_2$ supported on $X$ such that
	$$h_{\nu_1}(f)<\frac{h+h'}{2}\text{ and }h_{\nu_2}(f)>\frac{h+h'}{2}.$$
	By the convexity of the entropy function, there is an invariant measure $\nu_0$ that is a convex combination of $\nu_1$ and $\nu_2$ such that
	$h_{\nu_0}(f)=\frac{h+h'}{2}$.
	As $(X,f)$ is asymptotically entropy expansive, the entropy function is upper semi-continuous. So there is a neighborhood $\cN_0$ of $\nu_0$ such that
	$$h_\nu(f)<h'\text{ for every }\nu\in\cN_0.$$
	As the system is entropy dense, there is an ergodic measure $\mu\in\cN_0$
	such that $h_\mu(f)>h$, hence $h_\mu(f)\in (h, h')$.
\end{proof}

\section*{Acknowledgments}
The author is supported by National Natural Science Foundation of China (No. 11571387)
and CUFE Young Elite Teacher Project (No. QYP1902). The author would like to thank Paulo Varandas, Weisheng Wu, Dominik Kwietniak, Xueting Tian, Daniel J. Thompson and the anonymous referees.



\end{document}